\documentclass[12pt]{article}

\usepackage[demo]{graphicx} 
\usepackage{hyperref}
\usepackage{float}
\usepackage{indentfirst}
\usepackage{tikz}
\usepackage{amssymb}
\usepackage{amsmath,mathrsfs,amsfonts}
\usepackage{graphics}
\usepackage{amsthm}
\usepackage{setspace}
\usepackage{cases}
\usepackage{setspace}
\usepackage[mathscr]{eucal}
\usepackage{caption}
\usepackage{subcaption}
\usepackage{rotating}
\usetikzlibrary{decorations.pathreplacing}
\newtheorem{theo}{Theorem}[section]
\newtheorem{theorem}[theo]{Theorem}
\newtheorem{lem}[theo]{Lemma}

\newtheorem{corollary}[theo]{Corollary}

\newtheorem{conj}[theo]{Conjecture}

 \theoremstyle{definition}
\newtheorem{defi}[theo]{Definition}

\newtheorem{fact}[theo]{Fact}
\newtheorem{ques}[theo]{Question}

\theoremstyle{remark}

\newcounter{casenum}[theo]

\newcounter{subcasenum}[theo]

\newcounter{claimnum}[theo]

\allowdisplaybreaks
\begin{document}
\thispagestyle{plain}

\begin{center} {\Large Three Results on   Generalized Quasikernels in Digraphs
}
\end{center}
\pagestyle{plain}
\begin{center}
	{
		{\small  Zejun Huang\footnote{  Corresponding author. \\ Email: zejunhuang@szu.edu.cn (Huang), yangchenxi2022@email.szu.edu.cn (Yang)}, Chenxi Yang}\\[3mm]
		{\small   School of Mathematical Sciences, Shenzhen University, Shenzhen 518060, China }\\
		
	}
\end{center}
\begin{center}
\begin{minipage}{140mm}
\begin{center}
{\bf Abstract}
\end{center}
{\small
A $q$-kernel of a digraph $D$ is an independent set $Q\subseteq D$ such that every vertex of $D$ is reachable from $Q$ by a directed path of length at most $q$, which is a natural generalization of kernels and quasikernels. In this paper, we establish three results on generalized quasikernels. Firstly, we prove that any $n$-vertex source-free bipartite  oriented graph with no directed 4-cycles has a quasikernel of size at most $17n/35$. Secondly, we show that every digraph with no $(r-1)$-source set  contains $r$ pairwise disjoint $(3r-2)$-kernels, where $r\ge 2$. At last, we consider unicyclic digraph  with a directed cycle of length $2\ell$ and bipartition $U\cup V$, and we prove that for every odd integer $q\ge 3$, there exist two $q$-kernels $Q_U\subseteq U$ and $Q_V\subseteq V$
 such that \[
|Q_U|+|Q_V| \le 2\cdot \frac{\lceil \ell/(q+1)\rceil}{\ell} |V(D)|.
\]  These results confirm two conjectures and give an affirmative answer to a question posed by Spiro in European Journal of Combinatorics 133 (2026), 104307.

{\bf Keywords:} quasikernel; $q$-kernel; disjoint quasikernels; source-free digraph; bipartite digraph
}

\end{minipage}
\end{center}

\section{Introduction and main results}
We use standard digraph terminology. Throughout this paper, all digraphs are finite and
have no loops or multiple arcs. For vertices $u,v$ in a digraph $D$, we denote by
$\operatorname{dist}_D(u,v)$ the length of a shortest directed path from $u$ to $v$; if no
such path exists, we set $\operatorname{dist}_D(u,v)=\infty$. For a digraph $D$ and a set
$S\subseteq V(D)$, we denote by $D[S]$ the subgraph of $D$ induced by $S$. A set
$S\subseteq V(D)$ is called \emph{independent} if there is no arc between any two vertices
of $S$. For $S\subseteq V(D)$, we define
$\operatorname{dist}_D(S,v):=\min_{s\in S}\operatorname{dist}_D(s,v).$

For $S\subseteq V(D)$, we also write
$N_D^-(S)=\{x\in V(D)\setminus S:\ \text{there exists } y\in S \text{ such that } xy\in A(D)\}$, i.e., $N_D^-(S)$ is the set of external in-neighbors of $S$. For a vertex $v\in V(D)$,
we write $N_D^-(v):=N_D^-(\{v\})$. A digraph is called \emph{source-free} if
$N_D^-(v)\ne\emptyset$ for every $v\in V(D)$. When $D$ is clear from the context, we
simply write $N^-(S)$ and $N^-(v)$. For a digraph $D$, $v\in V(D)$, and an integer
$t\ge 1$, the closed backward $t$-ball centered at $v$ is defined as
\[
B_D^-(v,t):=\{u\in V(D)\mid \operatorname{dist}_D(u,v)\le t\}.
\]
When $D$ is clear from the context, we simply write $B^-(v,t)$.

Let $\mathcal{C}(D)$ be the set of strongly connected components of a digraph $D$. The
condensation digraph of $D$, denoted by $\operatorname{con}(D)$, is the digraph with
$V(\operatorname{con}(D))=\mathcal{C}(D)$
and
\[
A(\operatorname{con}(D))=\{\, (C,C')\in \mathcal{C}(D)\times \mathcal{C}(D): C\ne C'
\text{ and there exist} \, u\in C, v\in C' \text{ such that } uv\in A(D)\,\}.
\]
A strongly connected component $C$ of $D$ is a \emph{source component} if the
corresponding vertex in $\operatorname{con}(D)$ has in-degree $0$. A topological ordering
of a finite digraph $D$ is an ordering of its vertices $v_1,\dots,v_n$ such that for every
arc $v_i v_j\in A(D)$, we have $i<j$.

Let $K\subseteq V(D)$ be an independent set of $D$. We call $K$   a \emph{kernel}, a \emph{quasikernel}, and a \emph{$q$-kernel} of $D$ if
$\operatorname{dist}_D(K,v)\le 1$, $\operatorname{dist}_D(K,v)\le 2$, and $\operatorname{dist}_D(K,v)\le q$ for every $v\in V(D)$, respectively.
Thus a kernel is a $1$-kernel, and a quasikernel is a $2$-kernel. Kernels and quasikernels are classical absorbing structures in digraphs. While kernels do
not exist in all digraphs, the following result shows that quasikernels always exist.

\begin{theorem}\label{qk}\cite{C}
Every digraph contains a quasikernel.
\end{theorem}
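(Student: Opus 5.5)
Our plan is to prove the theorem by strong induction on $|V(D)|$, using the classical Chvátal--Lovász deletion argument. The empty digraph has the empty set as a quasikernel, and a one-vertex digraph has that vertex. For the induction step, let $D$ be a digraph with at least one vertex and pick any vertex $v$. Set $N^+[v]:=\{v\}\cup\{w : vw\in A(D)\}$, the closed out-neighbourhood of $v$, and let $D'=D-N^+[v]$. Since $D'$ has fewer vertices than $D$, the induction hypothesis gives a quasikernel $Q'$ of $D'$; possibly $D'$ is empty and $Q'=\emptyset$.

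We then split into two cases according to how $v$ relates to $Q'$ in $D$. \emph{Case 1:} some $u\in Q'$ has $uv\in A(D)$. We take $Q=Q'$. It is independent, because $D'$ is an induced subgraph and $Q'$ is independent in $D'$. Every vertex of $D'$ is within distance $2$ of $Q'$ in $D'$, hence also in $D$. The vertex $v$ is at distance $1$ from $u$, and each $w\in N^+(v)$ is at distance at most $2$ via $u\to v\to w$. \emph{Case 2:} no vertex of $Q'$ is an in-neighbour of $v$. We take $Q=Q'\cup\{v\}$. No vertex of $Q'$ lies in $N^+[v]$, since $Q'\subseteq V(D')$, so there is no arc from $v$ into $Q'$. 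The case assumption rules out arcs from $Q'$ to $v$. Hence $Q$ is independent. Vertices of $D'$ are still reached from $Q'$ within distance $2$, while $v$ itself and its out-neighbours are within distance $1$ of $v$.

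The only subtle point is the choice of what to delete. We remove $v$ together with its out-neighbours, rather than just $v$, so that in Case 2 the new vertex $v$ cannot have an arc into $Q'$. The other direction of independence is handled by the case split itself. We also have to check that distances in $D'$ never increase when we pass to $D$, which is immediate since $D'$ is a subgraph. This completes the induction.
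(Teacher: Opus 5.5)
Your proof is correct. Deleting the closed out-neighbourhood $N^+[v]$ rules out arcs from $v$ into $Q'$, and the case split handles arcs from $Q'$ to $v$. In Case 1 the path $u\to v\to w$ covers all of $N^+[v]$, so both cases give a quasikernel; the degenerate case $D'=\emptyset$ falls into Case 2.

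The paper does not prove this statement itself; it only cites Chv\'atal and Lov\'asz. Your argument is the standard inductive proof of their theorem, so there is no in-paper proof to compare it against.
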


 This
qualitative existence theorem naturally leads to quantitative questions, the most prominent
of which asks how small a quasikernel can be. In 1976, Erd\H{o}s and
Sz\'ekely proposed the following conjecture.

\begin{conj}\label{conj:small-qk}\cite{E}
Every $n$-vertex source-free digraph has a quasikernel of size at most $n/2$.
\end{conj}

 Gutin et al. \cite{GKTY} studied a natural strengthening of
Conjecture~\ref{conj:small-qk}: if two disjoint quasikernels exist, then one of them has
size at most half of the vertex set. In the terminology of this paper, they showed that a
source-free digraph with a quasikernel of size at most two has two disjoint quasikernels,
and they also constructed source-free digraphs with no pair of disjoint quasikernels.
Heard and Huang \cite{HH} showed that every source-free digraph $D$ has two
disjoint quasikernels when $D$ is semicomplete multipartite, quasi-transitive, or locally
semicomplete. Consequently, the small quasikernel conjecture holds for these three classes
of digraphs. Further positive results were obtained by Kostochka, Luo and Shan \cite{KLS},
who proved the conjecture for a broad class containing all orientations of $4$-colorable
graphs. Ai et al. \cite{AG} proved that the conjecture also holds for source-free
claw-free digraphs, and for source-free digraphs with no induced copy of
$\overrightarrow{K}_{4,1}$ or $\overrightarrow{K}^{+}_{4,1}$.

Recently, Spiro \cite{S} initiated the study of extremal problems for generalized
quasikernels. He proved that if $D$ is a source-free digraph and
$q\ge 3$, then $D$ contains two disjoint $q$-kernels; if a digraph has no $(r-1)$-source sets, then it contains $r$ disjoint $2^{r+1}$-kernels.
He also proved that if $D$ is a
source-free bipartite digraph which is not a disjoint union of directed $2$-cycles and
directed $4$-cycles, then $D$ contains a quasikernel $Q$ with
$|Q|<|V(D)|/2$. Furthermore, he proposed the following conjecture.
\begin{conj}\cite{S}
There exists $\varepsilon>0$ such that if $D$ is a source-free bipartite
digraph with no directed $2$-cycle and no directed $4$-cycle, then $D$
has a quasikernel $Q$ with
\[
|Q|\le \left(\frac12-\varepsilon\right)|V(D)|.
\]
\end{conj}

A non-empty set $S\subseteq V(D)$ is called an \emph{$s$-source set} if
\[
N^-(S)=\emptyset \quad\text{and}\quad |S|\le s.
\] A digraph is unicyclic if every vertex has in-degree 1 and   its
underlying graph is connected.
In the same paper of  Spiro \cite{S}, he proposed the following conjecture and question.

\begin{conj}\cite{S}
There exist $\varepsilon,r_0>0$ such that if $r\ge r_0$ and $D$ has no $(r-1)$-source set, then $D$ contains $r$ pairwise disjoint $q$-kernels
with
\[
q\le (2-\varepsilon)^r.
\]
\end{conj}

\begin{ques}\cite{S}\label{question}
Let $D$ be a  unicyclic digraph  with a directed cycle of length $2\ell$ and bipartition $U\cup V$.
When $q\ge 3$ is odd, are there  $q$-kernels $Q_U\subseteq U$ and $Q_V\subseteq V$ such that
\[
|Q_U|+|Q_V| \le 2\cdot \frac{\lceil \ell/(q+1)\rceil}{\ell} |V(D)|?
\]
\end{ques}

In this paper, we prove the above conjectures and answer Question \ref{question} affirmatively. Our main results are as follows.

\begin{theorem}\label{th3}
Let $D$ be an $n$-vertex source-free bipartite oriented graph without directed
$4$-cycles. Then $D$ has a quasikernel $Q$ such that
\[
|Q|\le \frac{17}{35}n.
\]
\end{theorem}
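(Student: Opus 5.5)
We will prove Theorem~\ref{th3} by building several candidate quasikernels and showing that a suitable weighted average of their sizes is at most $\frac{17}{35}n$. Since $D$ is bipartite with bipartition $U\cup V$ and source-free, each side $U$ and $V$ is itself a quasikernel: it is independent, and every vertex of the other side has an in-neighbour in it. Hence $\min(|U|,|V|)\le n/2$. This is tight only in the balanced case, so all the work is in improving $n/2$ by a constant factor, using the absence of directed $2$-cycles (orientation) and directed $4$-cycles.

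\textbf{Step 1 (shrinking a side).} Start from $Q=U$ and delete vertices while keeping quasikernel coverage. A vertex $u\in U$ can be removed if all of the following hold:
\begin{itemize}
\item some in-neighbour $v\in V$ of $u$ has an in-neighbour $u'\in U$ that stays in $Q$, so $u$ is still reached by the path $u'vu$;
\item every out-neighbour of $u$ still has another in-neighbour in $Q$, or is reached within distance $2$ from $Q$ in some other way.
\end{itemize}
We choose an auxiliary structure greedily, and symmetrically for $V$. For each $v\in V$, fix one in-neighbour $f(v)\in U$; for each $u\in U$, fix one in-neighbour $g(u)\in V$. Consider the functional digraph $u\mapsto f(g(u))$ on $U$. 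We take $Q_U$ to be a set of vertices of $U$ that is independent in the relevant sense, namely a vertex set hitting this functional structure, for instance a dominating set of the functional digraph. Because $D$ has no directed $4$-cycle, $f(g(u))\ne u$. Therefore every component of the functional digraph has a cycle of length at least $2$, and one can select roughly half of $U$ or fewer. The same is done for $V$.

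\textbf{Step 2 (covering the other side).} Vertices of $V$ must also be reached. A vertex $v$ is covered by $Q_U$ within distance $2$ if $f(v)\in Q_U$, or if $v$ is reachable from $Q_U$ through $V\to U\to V$. The no-$2$-cycle condition ensures that the functional pair maps do not collapse. We then form a mixed quasikernel as follows: take $Q_U$, and add the vertices of $V$ that are not covered and have no in-neighbour in $Q_U$, while keeping independence by discarding conflicting $U$-vertices. To bound the size, we do the counting per weak component of the functional digraph and per type of its cycle (length $2$, $3$, or at least $4$ in the contracted map). From this we derive linear inequalities among the class sizes.

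\textbf{Step 3 (averaging).} We have four candidates: $U$, $V$, and the two reduced mixed sets. Taking a convex combination with weights chosen through a small linear program over the parameters (the fractions of vertices in each local configuration) shows that the minimum is at most $\frac{17}{35}n$. The constant $17/35$ should be exactly the LP optimum, and the extremal local configuration would show where it comes from.

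The main obstacle is Step 1/2: making the removal of vertices from a side compatible with covering the opposite side while keeping independence. Deleting a $U$-vertex may leave some of its $V$ out-neighbours at distance $3$. I would first handle this by a careful case analysis on short cycles of the composed map $f\circ g$, where excluding $4$-cycles forbids the worst (length-$1$) case. The remaining difficulty is the length-$2$ cycles of $f\circ g$, which correspond to directed $8$-cycle-like structures. These should be exactly the configurations that force the constant $17/35$ rather than something better.
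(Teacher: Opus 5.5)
\textbf{There is a genuine gap: the proposal does not yet contain a proof.} Steps 2 and 3 refer to ``linear inequalities among the class sizes'' and to a linear program whose optimum ``should be'' $17/35$. No inequality is ever derived, so the constant is only asserted.

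The concrete failure is already in Step 1, which optimizes the wrong constraint. A set $Q_U\subseteq U$ is a quasikernel if and only if $Q_U\cap N^-(v)\neq\emptyset$ for every $v\in V$. Once this holds, each $u\in U\setminus Q_U$ is automatically within distance $2$: take any in-neighbour of $u$, which exists by source-freeness. Dominating the map $u\mapsto f(g(u))$ secures only this redundant $U$-side condition and says nothing about hitting the sets $N^-(v)$. Every $v$ with a unique in-neighbour forces that in-neighbour into $Q_U$. For example, on a directed $6$-cycle the only quasikernel inside $U$ is $U$ itself, although $f\circ g$ is then a $3$-cycle dominated by two vertices.

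Your repair in Step 2 does not close the gap. Suppose you add an uncovered $v$ and discard an out-neighbour $u\in Q_U$ of $v$. Then $u$ and its out-neighbours stay within distance $2$ of $v$. But a vertex $u''\in U$ that was reached only along $u\to v'\to u''$ is now at distance $3$. So the modification cascades, and you control neither coverage nor size.

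Your bookkeeping of the forbidden cycles is also off. $f(g(u))=u$ means $u\to g(u)\to u$, which is excluded by the absence of directed $2$-cycles, not $4$-cycles. A $2$-cycle $u_1,u_2$ of $f\circ g$ produces the directed $4$-cycle $u_2\to g(u_1)\to u_1\to g(u_2)\to u_2$. In general a $k$-cycle of $f\circ g$ is a directed $2k$-cycle of $D$. So all cycles of $f\circ g$ have length at least $3$, and the ``$8$-cycle-like'' obstruction you single out does not exist.

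\textbf{The missing idea.} The paper parametrizes by vertices with \emph{private} out-neighbours. Call $x$ good if it has an out-neighbour $y$ with $N^-(y)=\{x\}$. Let $P_U,P_V$ be the good vertices on the two sides, with $p_U=|P_U|$ and $p_V=|P_V|$. This parameter controls both kinds of candidates.

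(i) One-sided bound. A vertex $v\in V$ with no in-neighbour in $P_U$ has at least two in-neighbours, all in $U\setminus P_U$. At most $|V|-p_U$ vertices of $V$ are of this kind, since the private out-neighbours of $P_U$ are not. Put one edge inside each such $N^-(v)$; this gives a graph on $U\setminus P_U$ with at most $|V|-p_U$ edges. Every graph with $m$ vertices and $e$ edges has a vertex cover of size at most $(m+e)/3$. Hence $P_U$ together with such a cover is a quasikernel inside $U$ of size at most $(n+p_U)/3$.

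(ii) Mixed bound. The private arcs $x\to\phi(x)$ form vertex-disjoint directed paths and cycles, with exactly $n-p_U-p_V$ path components. Every cycle has length at least $6$, precisely because $D$ has no directed $2$- or $4$-cycle. Take every third vertex along each path, and on each $k$-cycle a quasikernel of size at most $2k/5$ with no two consecutive vertices. This gives a set $Q_0$ with $|Q_0|\le 2n/5$. Each member of $Q_0$ has a unique in-neighbour in $D$, lying neither in $Q_0$ nor among the path-starts; this is what makes independence checkable. Adding a kernel of the bipartite digraph induced on the path-starts with no in-neighbour in $Q_0$ gives a quasikernel of size at most $7n/5-p_U-p_V$.

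Use (i), or its mirror image for $V$, when $\min(p_U,p_V)\le 16n/35$, and (ii) otherwise; both cases give $17n/35$. This is the trade-off your Step 3 is reaching for. It requires a parameter like $p_U,p_V$ that governs the one-sided and the mixed constructions simultaneously; maps $f,g$ built from arbitrary in-neighbours carry no such information.
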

\begin{theorem}\label{th1}
Let $r\ge 2$.  If a digraph $D$ has no $(r-1)$-source set, then $D$ contains $r$ pairwise disjoint $(3r-2)$-kernels.
\end{theorem}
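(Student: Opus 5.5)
The plan is to induct on $r$, peeling off one $(3r-2)$-kernel at a time so that the remaining digraph satisfies the hypothesis for $r-1$. The quantity $3r-2$ suggests that each step costs exactly $3$ in the distance parameter. So the statement we actually prove will be slightly stronger and self-reducing: if $D$ has no $(r-1)$-source set, then $D$ contains $r$ pairwise disjoint independent sets $Q_1,\dots,Q_r$ with $\operatorname{dist}_D(Q_i,v)\le 3i-2$ for all $v$ and all $i\ge 2$, and $\operatorname{dist}_D(Q_1,v)\le 3r-2$.

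\textbf{Base case $r=2$.} Here the hypothesis says exactly that $D$ is source-free. We take a quasikernel $K$ from Theorem~\ref{qk}. Every vertex of $K$ has an in-neighbour, which lies outside $K$ by independence. We then take a quasikernel $K'$ of $D-K$. Every vertex of $D-K$ is within distance $2$ of $K'$, and every $k\in K$ has an in-neighbour in $D-K$, so $\operatorname{dist}_D(K',k)\le 3$. This makes $K'$ a $3$-kernel, hence a $4$-kernel, and $K$ is a $2$-kernel. This is essentially Spiro's two-disjoint-kernels argument, and it is consistent with the bound $3\cdot 2-2=4$.

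\textbf{Inductive step.} Given $D$ with no $(r-1)$-source set, we want an independent set $K$ with three properties:
\begin{itemize}
\item[(a)] $K$ is a quasikernel of $D$;
\item[(b)] every $k\in K$ has an in-neighbour outside $K$;
\item[(c)] $D-K$ has no $(r-2)$-source set.
\end{itemize}
Property (b) is automatic, since source components of $D$ have at least $r\ge 2$ vertices, so $D$ has no sources, and in-neighbours of $k\in K$ avoid $K$ by independence. By induction, $D-K$ then contains $r-1$ disjoint $(3r-5)$-kernels $Q_1,\dots,Q_{r-1}$. Each $Q_i$ is independent in $D$, and distances in $D$ are at most distances in $D-K$. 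Each $k\in K$ has an in-neighbour $v\notin K$, and $v$ is reached from $Q_i$ within $3r-5$, so $\operatorname{dist}_D(Q_i,k)\le 3r-4\le 3r-2$. Together with $K$ itself, this gives $r$ disjoint $(3r-2)$-kernels. The slack of $2$ here suggests that the true requirement on $K$ is weaker than (a)--(c). For instance, it may suffice that each $k\in K$ is within distance $3$ of $V(D)\setminus K$ by a path avoiding other kernels, and I will use that slack when fixing (c).

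\textbf{The main obstacle is property (c).} A small set $S\subseteq V(D)\setminus K$ with $|S|\le r-2$ may have all of its external in-neighbours inside $K$. My first attempt is to choose $K$ among all quasikernels (or all independent sets that are $3$-kernels, using the slack) so as to minimise a potential, such as $|K|$ or the position of $K$ in a topological ordering of $\operatorname{con}(D)$ built as in the Chvátal--Lovász proof. Then, given a violating $S$, I would exhibit an exchange. The set $S'=S\cup (N^-(S)\cap K)$ is not an $(r-1)$-source set of $D$ when $|N^-(S)\cap K|\le 1$, so some vertex outside $S'$ points into $S'$. This should let us swap a vertex of $K$ for a vertex of $S$ while preserving independence and the absorbing property, contradicting minimality. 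When $|N^-(S)\cap K|\ge 2$, I would instead pass to a component-wise argument: work inside a source component of $D-K$ and use that $D$ has no $(r-1)$-source set to bound how many vertices of $K$ such a component can depend on.

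If the exchange argument gets messy, the fallback is to build $K$ greedily along a reverse topological order of strong components, so that each small source component of $D-K$ can be handled locally. That construction is where I expect the real work, and the precise constant $3$ per step, to come from.
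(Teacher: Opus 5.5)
Your base case $r=2$ is correct, and the bookkeeping in your inductive step is fine \emph{provided} a set $K$ with (a)--(c) exists. The gap is that such a $K$ does not exist in general, so no exchange, minimality or greedy argument can produce it.

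Let $D$ be a directed cycle of length $r\ge 3$. Its only strong component has $r$ vertices, so $D$ has no $(r-1)$-source set and the theorem applies to it. Now take any nonempty independent set $K$, whether a quasikernel, a $3$-kernel, or anything else. Then $D-K$ is a disjoint union of directed paths. The first vertex of each path has its unique in-neighbour in $K$, so in $D-K$ it forms a $1$-source set, and $1\le r-2$. Thus (c) fails for every candidate $K$, and your induction hypothesis can never be invoked.

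This already happens for the directed triangle $a\to b\to c\to a$. Its only quasikernels are singletons, and $D-\{a\}$ is the path $b\to c$ with source $b$. The slack of $2$ in the distance bound does not help, because the failure lies in the hypothesis on $D-K$, not in the distance estimate. In the triangle, the other kernels $\{b\}$ and $\{c\}$ work only because their absorbing paths pass through the deleted vertex $a$. So deleting $K$ and requiring $D-K$ to satisfy the $(r-1)$-hypothesis on its own is the wrong reduction. Any inductive version would have to be relative to $D$: later kernels avoid $K$ but may absorb through it. You have not formulated such a hypothesis.

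The paper avoids deletion altogether. It constructs an $(r,3r-4)$-locally colorful colouring $\chi:V(D)\to[r]$, meaning $|\chi(B_D^-(v,3r-4-a))|\ge r-a$ for all $v$ and all $0\le a\le r-1$. The colouring is built along a topological order of $\operatorname{con}(D)$.
\begin{itemize}
\item Each source component has at least $r$ vertices by Lemma~\ref{lemma1}. It is seeded with a cycle of length at least $r$ coloured cyclically, or, if every cycle is shorter than $r$, with an ear-built strongly connected subgraph on between $r$ and $2r-3$ vertices using all colours.
\item The colouring is then extended along ears within components and along arcs into later components.
\item The graded condition in the parameter $a$ is exactly what makes extension along a new path possible (Lemma~\ref{lemma3}).
\end{itemize}
Finally, a quasikernel $Q_i$ of $D[\chi^{-1}(i)]$ exists by Theorem~\ref{qk}. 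Every vertex is within distance $3r-4$ of colour class $i$, hence within $3r-2$ of $Q_i$. The $Q_i$ are pairwise disjoint because the colour classes are.
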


\begin{theorem}\label{th2}
Let $D$ be a  unicyclic digraph  with a directed cycle of length $2\ell$ and bipartition $U\cup V$.
If $q\ge 3$ is odd, then there exist $q$-kernels $Q_U\subseteq U$ and
$Q_V\subseteq V$ such that
\[
        |Q_U|+|Q_V|
        \le
        2\cdot
        \frac{\lceil \ell/(q+1)\rceil}{\ell}|V(D)|.
\]
\end{theorem}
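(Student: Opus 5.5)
We use the structure of a unicyclic digraph: every vertex has in-degree exactly $1$ and the underlying graph is connected. So $D$ consists of a directed cycle $C=c_0c_1\cdots c_{2\ell-1}c_0$ together with out-trees hanging from the cycle vertices. The plan is to assign each vertex a \emph{level} in $\mathbb{Z}_{2\ell}$, build the kernels as unions of whole levels, and then use an averaging argument over rotations to control their total size.

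\textbf{Step 1 (levels and the ancestor property).} For $v\in V(D)$, follow the unique in-neighbour path backwards from $v$ until it first hits the cycle, say at $c_j$ after $d$ steps. Define
\[
\varphi(v):=j+d \pmod{2\ell}.
\]
Arcs increase $\varphi$ by exactly $1$, and $D$ is bipartite because its only cycle is even. Hence, after possibly swapping names, $U$ is the set of vertices of even level and $V$ the set of vertices of odd level.

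The key observation is the following. Since every vertex has in-degree $1$, the backward walk from $v$ can be continued forever: it goes up the tree and then around $C$ indefinitely. Therefore, for every $k\ge 0$ there is a vertex $w$ with $\varphi(w)=\varphi(v)-k$ and $\operatorname{dist}_D(w,v)\le k$. Consequently, for any $A\subseteq \mathbb{Z}_{2\ell}$, the set
\[
Q_A:=\{v:\varphi(v)\in A\}
\]
satisfies $\operatorname{dist}_D(Q_A,v)\le q$ for all $v$ provided every residue $r$ has some $a\in A$ with $r-a\in\{0,1,\dots,q\}\pmod{2\ell}$. Equivalently, cyclically consecutive elements of $A$ are at distance at most $q+1$. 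If $A$ consists only of even residues (respectively only odd residues), then $Q_A\subseteq U$ (respectively $Q_A\subseteq V$). In that case $Q_A$ is automatically independent, because $D$ is bipartite.

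\textbf{Step 2 (a good pattern).} Let $k=\lceil 2\ell/(q+1)\rceil$ and take
\[
A=\{0,\,q+1,\,2(q+1),\dots,(k-1)(q+1)\}\subseteq\mathbb{Z}_{2\ell}.
\]
Since $q$ is odd, $q+1$ is even, so every element of $A$ is even. The last gap is $2\ell-(k-1)(q+1)$, which is even, positive and at most $q+1$. So $A$ satisfies the covering condition of Step 1. Every translate $A+2t$ consists of even residues, and every translate $A+2t+1$ consists of odd residues; all of these also satisfy the covering condition. By Step 1, $Q_{A+2t}\subseteq U$ and $Q_{A+2t+1}\subseteq V$ are $q$-kernels for every $t$. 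Moreover,
\[
k=\left\lceil \frac{2\ell}{q+1}\right\rceil\le 2\left\lceil \frac{\ell}{q+1}\right\rceil .
\]

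\textbf{Step 3 (averaging).} Let $n_r$ be the number of vertices of level $r$. As $t$ ranges over $0,\dots,\ell-1$, each even residue lies in exactly $|A|=k$ of the sets $A+2t$. Hence
\[
\sum_{t=0}^{\ell-1}|Q_{A+2t}|=k\sum_{r\ \mathrm{even}}n_r=k|U|,
\]
so some $t$ gives a $q$-kernel $Q_U\subseteq U$ with $|Q_U|\le k|U|/\ell$. In the same way, the odd translates give a $q$-kernel $Q_V\subseteq V$ with $|Q_V|\le k|V|/\ell$. Adding these and using Step 2,
\[
|Q_U|+|Q_V|\le \frac{k}{\ell}\,|V(D)|\le 2\cdot\frac{\lceil \ell/(q+1)\rceil}{\ell}\,|V(D)|,
\]
which is the bound in Theorem~\ref{th2}.

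The main point needing care is Step 1: we must check that the backward walk of every length really exists, which uses in-degree exactly $1$ everywhere. Once that holds, a tree vertex close to the cycle is covered by a cycle vertex just as a deep tree vertex is covered by its ancestor. The remaining steps are arithmetic, in which the oddness of $q$ is what makes all gaps in $A$ even.
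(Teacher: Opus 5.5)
Your proof is correct and is essentially the paper's argument. Take $\varphi(u_i)=2i$ and $\varphi(v_i)=2i+1$ on the cycle. Then your single level map $\varphi$ into $\mathbb{Z}_{2\ell}$ packages the paper's two coordinates $\rho_U=\lfloor\varphi/2\rfloor$ and $\rho_V=\lfloor(\varphi-1)/2\rfloor$ into $\mathbb{Z}_\ell$, your translates $A+2t$ with step $q+1$ are the paper's sets $S(a)$ with step $h=(q+1)/2$, and the averaging with $k=\lceil 2\ell/(q+1)\rceil\le 2\lceil \ell/(q+1)\rceil$ is the same (your explicit check that backward walks of every length exist is what the paper uses implicitly when reading coordinates off the backward in-neighbor chain).
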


Theorem \ref{th3} confirms Conjecture 1.3 with $\varepsilon=1/70$. Theorem \ref{th1} confirms Conjecture 1.4 with $\varepsilon=1/2$ and $r_0=8$. We will present the proofs of Theorem \ref{th3}, Theorem \ref{th1} and Theorem \ref{th2} in Section 2, Section 3 and Section 4, respectively.

\section{Proof of Theorem \ref{th3}}

In this section, we always assume that  $D$ is a source-free bipartite digraph of order $n$ with bipartition $V(D)=A\cup B$. We need the following preliminary lemmas.
\begin{lem}\label{bck}\cite{RM}
Every finite bipartite digraph has a kernel.
\end{lem}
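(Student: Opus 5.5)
We argue by induction on $|V(D)|$, building the kernel one source strongly connected component at a time. Throughout, a kernel $K$ means what the paper defines: $K$ is independent and every vertex outside $K$ has an in-neighbour in $K$, so that $\operatorname{dist}_D(K,v)\le 1$ for all $v$. This is also a special case of Richardson's theorem, since a bipartite digraph has no odd directed cycle. For the empty digraph there is nothing to prove.

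Let $D$ be bipartite with bipartition $A\cup B$ and $V(D)\neq\emptyset$. The condensation $\operatorname{con}(D)$ is acyclic, so it has a source component $C$. By definition no arc enters $C$ from $V(D)\setminus C$. We choose a set $X\subseteq C$ as follows.
\begin{itemize}
\item If $|C|=1$, say $C=\{v\}$, put $X=\{v\}$. Then $v$ has no in-neighbours at all.
\item If $|C|\ge 2$, put $X=C\cap A$. Since $D[C]$ is strongly connected with at least two vertices, every vertex of $C\cap B$ has an in-neighbour inside $C$. Because $D$ is bipartite, that in-neighbour lies in $C\cap A=X$. So $X$ absorbs all of $C\setminus X$.
\end{itemize}
In both cases $X$ is independent, because it lies in one side of the bipartition.

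Next let $N^+(X)$ be the set of out-neighbours of $X$, and set
\[
D'=D-(C\cup N^+(X)).
\]
The digraph $D'$ is again a finite bipartite digraph with fewer vertices, so by induction it has a kernel $K'$. Put $K=X\cup K'$. It remains to check two properties.
\begin{itemize}
\item \emph{Independence.} There is no arc from $X$ to $K'$, since $K'$ avoids $N^+(X)$. There is no arc from $K'$ to $X$, since $K'\cap C=\emptyset$ and $C$ is a source component.
\item \emph{Absorption.} Vertices of $C\setminus X$ have an in-neighbour in $X$. This is vacuous when $|C|=1$ and was shown above when $|C|\ge 2$. Vertices of $N^+(X)\setminus C$ have an in-neighbour in $X$ by definition. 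Every other vertex outside $K$ lies in $D'$, so it has an in-neighbour in $K'$.
\end{itemize}

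The only point needing care is the choice of $X$ in a nontrivial component. The whole of one colour class of $C$ must dominate the rest of $C$, and this is exactly where strong connectivity and bipartiteness are used. The source-component property is what prevents arcs from $K'$ back into $X$, and removing $N^+(X)$ prevents arcs from $X$ into $K'$.
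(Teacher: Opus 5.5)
Your proof is correct. It is a genuinely different route only in the sense that the paper gives no proof of this lemma at all: it simply cites it. The statement is the bipartite case of Richardson's theorem (every digraph with no odd directed cycle has a kernel), and a bipartite digraph has no odd cycles of any kind.

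Your argument is a self-contained version of the standard reduction behind Richardson's theorem:
\begin{itemize}
\item peel off a source strong component $C$;
\item take a kernel $X$ of $D[C]$;
\item delete $C\cup N^+(X)$ and recurse, using that no arc enters $C$ to rule out arcs from $K'$ back into $X$.
\end{itemize}
The hypothesis is used only in the strongly connected step, and for bipartite digraphs this step is immediate: one colour class of $C$ absorbs the other. In Richardson's general setting one must first show, by a parity-of-walk-length argument, that a strong digraph without odd directed cycles admits such a two-class structure.

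The citation gives the broader class at no cost. Your route gives a short elementary proof tailored to exactly what the paper needs, namely a kernel of the bipartite digraph $D[S]$ in Lemma~\ref{claim3}. It is also written directly for the paper's convention that a kernel dominates along out-arcs, so no reversal of arcs is needed.
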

\begin{lem}\label{fact3}
Let $C$ be a directed cycle of even length $k \ge 6$. Then $C$ has a quasikernel $Q_C$
such that $|Q_C| \le \lceil k/3 \rceil\le 2k/5$, and no two consecutive vertices of $C$ lie in $Q_C$.
\end{lem}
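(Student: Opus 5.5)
We give an explicit construction. Label the cycle $C=v_0v_1\cdots v_{k-1}v_0$, with arcs $v_iv_{i+1}$ and indices taken modulo $k$. In a directed cycle the only vertices within distance $2$ of a vertex $v_i$ are $v_i$, $v_{i+1}$ and $v_{i+2}$. So a set $Q\subseteq V(C)$ is a quasikernel exactly when two conditions hold. First, $Q$ is independent, which here means no two cyclically consecutive vertices both lie in $Q$. Second, every cyclic gap between successive chosen indices has length at most $3$. Here the gap from a chosen index $i$ to the next chosen index $j$ is $j-i \pmod k$.

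Write $k=3m+s$ with $s\in\{0,1,2\}$. Set $t=\lceil k/3\rceil$. Then $t=m$ if $s=0$, and $t=m+1$ otherwise. We choose $t$ indices around the cycle so that the cyclic gaps sum to $k$ and each gap lies in $\{2,3\}$. This amounts to writing $k=2a+3b$ with $a+b=t$ and $a,b\ge 0$, which forces $a=3t-k$ and $b=k-2t$. For feasibility we need $0\le 3t-k$ and $k-2t\ge 0$.

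\begin{itemize}
\item The first inequality holds by the choice $t=\lceil k/3\rceil$.
\item For the second, $2t\le 2(k+2)/3\le k$ holds whenever $k\ge 4$, and in particular for $k\ge 6$.
\end{itemize}

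Place $v_0\in Q$ and add the gaps successively to get the remaining chosen indices. Every gap is at least $2$, so no two consecutive vertices are chosen, and this includes the wrap-around pair. Every gap is at most $3$, so every vertex is reached from $Q$ within distance $2$.

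It remains to check $\lceil k/3\rceil\le 2k/5$ for even $k\ge 6$. Since $\lceil k/3\rceil\le (k+2)/3$, it suffices to have $(k+2)/3\le 2k/5$, that is, $5k+10\le 6k$, that is, $k\ge 10$. The remaining even values are checked directly:
\begin{itemize}
\item $k=6$: $\lceil 6/3\rceil=2\le 12/5$.
\item $k=8$: $\lceil 8/3\rceil=3\le 16/5$.
\end{itemize}

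The only delicate point is this arithmetic for small $k$ in the final inequality, and the small cases handle it.
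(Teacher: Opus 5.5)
Your proof is correct and takes essentially the same route as the paper. The paper's explicit set $\{1,4,7,\dots\}$ (with the last element moved to $3t$ when $k\equiv 1 \pmod 3$) is exactly a placement of $\lceil k/3\rceil$ vertices with cyclic gaps in $\{2,3\}$, which is what you construct; your explicit verification of $\lceil k/3\rceil\le 2k/5$ (via $k\ge 10$ plus the cases $k=6,8$) fills in a step the paper only asserts.
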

\begin{proof}
Label the vertices of $C$ by $[k]$ in cyclic order. Denote by $t=\lceil k/3\rceil-1$.
Let $Q_C=\{1,4,7,\ldots,3t+1\}$ if $k\ne 3t+1$ and $Q_C=\{1,4,7,\ldots,3t-2,3t\}$ if $k=3t+1$.
Then $Q_C$ is a quasikernel with size $|Q_C| = \lceil k/3\rceil$.
For $t\ge 6$ is even, $\lceil t/3 \rceil\le 2t/5$ is always true.
\end{proof}

\begin{lem}\label{lemma7}
Let $X\subseteq A$.  If
\[
X\cap N_D^-(b)\ne \emptyset \quad \text{for all}\quad b\in B,
\]
then $X$ is a quasikernel of $D$.
\end{lem}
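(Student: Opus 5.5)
We check the two defining properties of a quasikernel directly: independence, and reachability of every vertex from $X$ within distance $2$. Both use only that $D$ is bipartite with parts $A$ and $B$ and that $D$ is source-free (the standing assumption of this section).

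For independence, note that $X\subseteq A$ and every arc of $D$ joins a vertex of $A$ to a vertex of $B$. Hence $D[X]$ has no arcs, and $X$ is independent.

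For the distance condition, take first $b\in B$. The hypothesis gives some $x\in X\cap N_D^-(b)$, so $xb\in A(D)$ and $\operatorname{dist}_D(X,b)\le 1$. Now take $a\in A$. If $a\in X$, the distance is $0$. Otherwise, since $D$ is source-free, $a$ has an in-neighbor $b$. By bipartiteness, $b\in B$. Applying the hypothesis to this $b$ gives $x\in X$ with $xb\in A(D)$, so $x\to b\to a$ is a directed path of length $2$, and $\operatorname{dist}_D(X,a)\le 2$.

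Every vertex of $D=A\cup B$ is therefore within distance $2$ of $X$, so $X$ is a quasikernel. There is no real obstacle. The only points to flag are that the source-free assumption is needed for vertices of $A\setminus X$, and that bipartiteness forces their in-neighbors to lie in $B$.
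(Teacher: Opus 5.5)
Your proof is correct and matches the paper's argument step for step. Independence comes from $X\subseteq A$, each $b\in B$ is at distance at most $1$ by hypothesis, and each $a\in A\setminus X$ is reached via an in-neighbor $b\in B$ supplied by source-freeness and bipartiteness.
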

\begin{proof}
The set $X$ is independent because $X\subseteq A$.  Every $b\in B$ has
an in-neighbor in $X$, so $\operatorname{dist}_D(X,b)\le 1$.
Let $a\in A\setminus X$.  Since $D$ is source-free, $a$ has an
in-neighbor $b\in B$. By the assumption, there exists $x\in X\cap N_D^-(b)$.
Therefore,  we have $\operatorname{dist}_D(X,a)\le 2$, and $X$ is a quasikernel of $D$.
\end{proof}

\begin{lem}\label{lemma8}
Let $H$ be a simple graph with $m$ vertices and $e$ edges. Then $H$ has a vertex cover of size at most $(m+e)/3$.
\end{lem}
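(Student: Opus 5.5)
We argue by induction on $m$, repeatedly deleting a vertex of degree at least $2$ and putting it into the cover. The base case $m=0$ is trivial, since the empty set covers the empty graph and $0\le (0+0)/3$.

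For the inductive step, first suppose $H$ has a vertex $v$ with $d:=\deg_H(v)\ge 2$. We put $v$ into the cover and pass to $H-v$. This graph has $m-1$ vertices and $e-d$ edges, so by induction it has a vertex cover $C'$ with
\[
|C'|\le \frac{(m-1)+(e-d)}{3}.
\]
Every edge of $H$ either contains $v$ or lies in $H-v$, so $C'\cup\{v\}$ is a vertex cover of $H$. Its size satisfies
\[
1+\frac{m-1+e-d}{3}=\frac{m+e+2-d}{3}\le \frac{m+e}{3},
\]
and the last inequality holds exactly because $d\ge 2$.

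Otherwise every vertex of $H$ has degree at most $1$. Then $E(H)$ is a matching, so its $e$ edges have $2e$ distinct endpoints and $2e\le m$. Choosing one endpoint from each edge gives a vertex cover of size $e$. The inequality $e\le (m+e)/3$ is equivalent to $2e\le m$, which we have just verified. This completes the induction.

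The only real point is that the threshold $d\ge 2$ is exactly what makes the inductive step work. The degree-$\le 1$ case then has to be handled directly, and there the inequality holds because the edges form a matching.
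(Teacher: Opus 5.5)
Your proof is correct and matches the paper's argument essentially step for step. Both induct on the number of vertices, remove a vertex of degree $d\ge 2$ (which pays for itself since $1+\frac{(m-1)+(e-d)}{3}\le\frac{m+e}{3}$), and handle the $\Delta(H)\le 1$ case directly as a matching with $e\le m/2$; your write-up also avoids the paper's slip of stating the inductive hypothesis in terms of ``edges'' rather than vertices.
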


\begin{proof}
We use induction on $m$. For the base case $m=0$, since $H$ is an empty graph, the statement is trivial.
Now suppose $H$ is a simple graph with $m\ge 1$ edges and assume the statement holds for graphs with less than $m$ edges.

If $\Delta(H)\le 1$, then the edges of $H$ form a matching.
By selecting exactly one endpoint from each edge, we obtain a vertex cover whose size equals the number of edges
 $e\le m/2\le (m+e)/3$.

For the case $\Delta(H)\ge 2$, choose a vertex $v$ with degree $d\ge 2$.  By induction, $H-v$
has a vertex cover $U$ of size at most $\big((m-1)+(e-d)\big)/3$. It follows that $U\cup\{v\}$ is
  a vertex cover of $H$ of size at most $(m+e)/3$.
\end{proof}

\begin{defi}
A vertex $x\in V(D)$ is called \emph{good} if it has an out-neighbor
$y$ such that $N_D^-(y)=\{x\}$.
Such a vertex $y$ is called a \emph{private out-neighbor} of $x$. Let
\[ P_A=\{a\in A:a\text{ is good}\},\quad P_B=\{b\in B:b\text{ is good}\},\] and denote
$p_A=|P_A|,p_B=|P_B|.$

\end{defi}

\begin{lem}\label{lemma9}
$D$ has a quasikernel $Q_A\subseteq A$ satisfying
$|Q_A|\le (n+p_A)/3$.
Symmetrically, \(D\) has a quasikernel $Q_B\subseteq B$ satisfying
$|Q_B|\le (n+p_B)/3$.
\end{lem}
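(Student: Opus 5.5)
The plan is to build $Q_A\subseteq A$ so that $Q_A\cap N_D^-(b)\neq\emptyset$ for every $b\in B$, and then conclude with Lemma~\ref{lemma7}. The case of $Q_B$ is symmetric, with the roles of $A$ and $B$ exchanged. Since $D$ is bipartite and source-free, every $b\in B$ has at least one in-neighbor, and all of its in-neighbors lie in $A$.

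First I put all of $P_A$ into $Q_A$. These vertices are essentially forced: if $b\in B$ has $|N_D^-(b)|=1$, then its unique in-neighbor $a$ is good, so $a\in P_A$. Let $B'$ be the set of vertices $b\in B$ that have no in-neighbor in $P_A$. Every $b\in B'$ then satisfies $|N_D^-(b)|\ge 2$ and $N_D^-(b)\subseteq A\setminus P_A$. Each $a\in P_A$ has at least one private out-neighbor, which lies in $B$. This private out-neighbor is dominated by $a$, so it is not in $B'$. Distinct good vertices have distinct private out-neighbors, since a private out-neighbor has exactly one in-neighbor. Hence
\[
|B'|\le |B|-p_A .
\]

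Next I build an auxiliary simple graph $H$ on the vertex set $A\setminus P_A$. For each $b\in B'$ I choose two distinct in-neighbors $a_b,a_b'$ of $b$ and add the edge $a_ba_b'$, merging repeated edges. Any vertex cover $U$ of $H$ meets $N_D^-(b)$ for every $b\in B'$. The graph $H$ has $m=|A|-p_A$ vertices and $e\le |B'|\le |B|-p_A$ edges. By Lemma~\ref{lemma8}, $H$ has a vertex cover $U$ with
\[
|U|\le \frac{(|A|-p_A)+(|B|-p_A)}{3}=\frac{n-2p_A}{3}.
\]

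Finally I set $Q_A=P_A\cup U\subseteq A$. Every $b\in B\setminus B'$ has an in-neighbor in $P_A$, and every $b\in B'$ has an in-neighbor in $U$. So Lemma~\ref{lemma7} shows that $Q_A$ is a quasikernel, and
\[
|Q_A|\le p_A+\frac{n-2p_A}{3}=\frac{n+p_A}{3}.
\]

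The main obstacle is the edge count. The naive bound $e\le|B|$ gives only $(n+2p_A)/3$. The required improvement comes from the injectivity of private out-neighbors, which removes $p_A$ vertices of $B$ from the count.
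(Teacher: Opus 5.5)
Your proposal is correct and essentially reproduces the paper's proof: you put $P_A$ into $Q_A$, use injectivity of private out-neighbors to bound the set of $b\in B$ with no in-neighbor in $P_A$ by $|B|-p_A$, turn each such $b$ into an edge of an auxiliary graph on $A\setminus P_A$, and finish with Lemma~\ref{lemma8} and Lemma~\ref{lemma7}. Your check that every $b\in B'$ has at least two in-neighbors, all lying in $A\setminus P_A$, is the same step the paper uses, stated a bit more explicitly.
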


\begin{proof}
We only   prove the first part, since a symmetric argument works for the second part.
For each $a\in P_A$, choose one private out-neighbor $\phi(a)\in B$. By definition,   $\phi(a)\ne \phi(b)$ for distinct $a,b\in P_A$.

Let
\[
B_0=\{b\in B: N_D^-(b)\cap P_A=\emptyset\}.
\]
Then $|B_0|\le |B|-p_A$, as  $\{\phi(a): a\in P_A\}\subseteq B$ is disjoint with $B_0$.

For each $b \in B_0$, we have $|N_D^-(b)|\ge 2$; otherwise $N^{-}_D(b)=\{a\}$ leads to $a\in P_A$, which contradicts $b\in B_0$.
We construct a simple undirected graph $H$ with vertex set $V(H) = A \setminus P_A$.
For each $b \in B_0$, we select two distinct vertices from $N_D^-(b)$ and add an edge joining them in $H$, while repeated edges are ignored if they occur.
Obviously, we have $|V(H)|=|A|-p_A$ and $|E(H)|\le |B_0|\le |B|-p_A$.
By Lemma \ref{lemma8}, $H$ has a vertex cover $C$ with
\[
|C|\le \frac{(|A|-p_A)+(|B|-p_A)}{3}=\frac{n-2p_A}{3}.
\]

Let $Q_A=P_A\cup C$. If $b\notin B_0$, then $N_D^-(b)\cap P_A\ne \emptyset$.
If $b\in B_0$, then the edge chosen inside $N_D^-(b)$ is covered by $C$, so $C\cap N_D^-(b)\ne \emptyset$.
By Lemma \ref{lemma7}, $Q_A$ is a quasikernel.  Moreover,
$|Q_A|\le p_A+(n-2p_A)/3=(n+p_A)/3$.
\end{proof}

\begin{lem}\label{claim3}
$D$ has a quasikernel $Q$ with
\[
|Q|\le\frac{7n}{5}-p_A-p_B.
\]
\end{lem}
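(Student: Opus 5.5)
The plan is to pass to a spanning subdigraph in which every vertex has in-degree exactly one, build a quasikernel there, and then transfer it back to $D$. Throughout I use the standing hypotheses of Theorem~\ref{th3}: $D$ is oriented, bipartite, source-free, and has no directed $4$-cycle.

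\textbf{Reduction to in-degree one.} For every vertex $y$, fix one in-arc $x_y y$ of $D$. If $|N_D^-(y)|=1$ the choice is forced, and then $x_y$ is good. Whenever possible, choose $x_y\in P_A\cup P_B$. Let $D'$ be the resulting spanning subdigraph. Every vertex of $D'$ has in-degree exactly one, so each weak component of $D'$ consists of a single directed cycle with in-trees hanging off it. Every cycle of $D'$ is a directed cycle of $D$, hence it has even length. Since $D$ is oriented and has no directed $4$-cycle, each such cycle has length $k\ge 6$, so Lemma~\ref{fact3} applies to it.

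\textbf{Construction of $Q'$.} On each cycle $C$ of $D'$ I take the quasikernel $Q_C$ of Lemma~\ref{fact3}, of size at most $2|C|/5$. On the tree vertices I argue as in the proof of Theorem~\ref{qk}, processing from the leaves (the vertices of out-degree $0$ in $D'$) toward the cycle. A leaf must be absorbed, and the cheapest absorber is its parent or grandparent. I choose absorbers so that every vertex placed in $Q'$ off the cycles is charged to a distinct vertex outside $P_A\cup P_B$, and so that the cycle vertices are charged at rate $2/5$. The counting target is
\[
|Q'|\le (n-p_A-p_B)+\tfrac25 n=\tfrac75 n-p_A-p_B .
\]
The bound is trivial when $p_A+p_B\le 2n/5$, since then the right-hand side is at least $n$. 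So the real content is that once many good vertices exist, each one saves a vertex; intuitively, each private out-neighbor is absorbed by its owner without costing an extra element of $Q'$.

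\textbf{Transfer back to $D$.} A directed path in $D'$ is also a directed path in $D$, so every vertex is still within distance $2$ of $Q'$ in $D$. The difficulty is that $Q'$ need not be independent in $D$, because $D$ has arcs that were discarded when forming $D'$. I would repair this as in the proof of Theorem~\ref{qk}: whenever $uv\in A(D)$ with $u,v\in Q'$, delete $v$. The vertex $v$ is now at distance $1$ from $u$, but the vertices that $v$ absorbed are now only at distance at most $3$. The bipartite structure (arcs of $D$ go only between $A$ and $B$) together with the absence of directed $4$-cycles should be what limits these conflicts. One natural fix is to choose the tree part of $Q'$ inside a single colour class, which rules out arcs between tree vertices. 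Deleting vertices can only decrease $|Q'|$, so the bound survives.

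\textbf{Main obstacle.} The step I expect to be hardest is the independence repair without losing the distance-$2$ property. I would first attempt to show that conflicts arise only between a cycle vertex of some $Q_C$ and a tree vertex. In that case the cycle vertex can be kept, and the tree vertex replaced by its own out-neighbour, which lies in the opposite colour class. This replacement may not preserve the charging scheme, and making the charging and the repair compatible is where most of the work will lie.
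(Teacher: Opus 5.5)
\textbf{There is a genuine gap.} The two steps that carry all the content, the charging and the independence repair, are not carried out. Moreover, the charging as you state it cannot work.

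\emph{The charging fails.} Take a directed $6$-cycle $c_0\to c_1\to\cdots\to c_5\to c_0$ and attach a directed path $c_0\to w_1\to\cdots\to w_m$. This $D$ satisfies all hypotheses. It already has in-degree one everywhere, so it is your $D'$, and every vertex except $w_m$ is good. Yet any quasikernel contains at least $\lceil (m-2)/3\rceil$ of the $w_i$, since the cycle reaches only $w_1,w_2$ within two steps. So for $m\ge 6$ the tree vertices of $Q'$ cannot be charged to distinct vertices outside $P_A\cup P_B$. The accounting that does work is different: at most $2n/5$ vertices selected along path and cycle components, plus \emph{one} extra vertex per path component, with exactly $n-p_A-p_B$ path components.

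\emph{The repair is unresolved.} Deleting $v$ when $uv\in A(D)$ leaves the vertices absorbed only by $v$ at distance $3$, as you note yourself. Nothing in the proposal prevents such conflicts, because a vertex chosen in a tree or on a cycle of your $D'$ may have many further in-neighbours in $D$.

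The missing idea is to keep only the arcs $x\phi(x)$, where $\phi(x)$ is a fixed private out-neighbour of a good vertex $x$.
\begin{itemize}
\item Since $\phi$ is injective, this $D'$ has all in- and out-degrees at most $1$. So it is a disjoint union of directed cycles, each of length at least $6$, and directed paths. Since $D'$ has $p_A+p_B$ arcs, it has exactly $n-p_A-p_B$ path components.
\item Let $Q_0$ consist of $x_3,x_6,\dots$ on each path $x_1\to\cdots\to x_\ell$, together with a set from Lemma~\ref{fact3} with no two consecutive vertices on each cycle. Then $|Q_0|\le 2n/5$.
\item Each vertex of $Q_0$ is a private out-neighbour, so its \emph{only} in-neighbour in $D$ is its predecessor in $D'$. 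That predecessor is neither in $Q_0$ nor a path start. This is what makes independence automatic, with no repair needed.
\item The only vertices possibly not yet reached within two steps are $x_1,x_2$ on paths whose start $x_1$ has no in-neighbour in $Q_0$. Let $S$ be the set of these starts, and add a kernel $K$ of the bipartite digraph $D[S]$ (Lemma~\ref{bck}).
\item Then $|K|\le |S|\le n-p_A-p_B$. No arc goes from $Q_0$ into $S$ by the choice of $S$, and no arc goes from $S$ into $Q_0$ by the previous point. Hence $Q_0\cup K$ is independent.
\item Every $x\in S\setminus K$ has an in-neighbour in $K$, so $x$ and its successor are reached within two steps. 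Thus $Q_0\cup K$ is a quasikernel of the required size.
\end{itemize}
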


\begin{proof}
For every good vertex $x \in P_A\cup P_B$, choose one private out-neighbor and denote it by $\phi(x)$.
The map $\phi: P_A\cup P_B \to V(D)$ is injective.
Let $D'$ be the spanning subgraph of $D$ with arc set
\[
A(D')=\{x\phi(x)|x\in P_A\cup P_B \}.
\]
In $D'$, every vertex has out-degree at most $1$ and in-degree at most $1$.
Therefore, every weak connected component of $D'$ is a directed path or a directed cycle.
Since $|V(D')|-|A(D')|=n-p_A-p_B$,
there are exactly $n-p_A-p_B$ directed path components,
where an isolated vertex is regarded as a path of length zero.

Recall the assumption that $D$ is bipartite and contains no directed $2$-cycle or $4$-cycle.
For every directed cycle component $C$ of $D'$, by  Lemma \ref{fact3} we can choose a  quasikernel $Q_C$
such that $|Q_C| \le 2|V(C)|/5$, and no two consecutive vertices of $C$ lie in $Q_C$.
For every directed path component $P: x_1\to x_2\to\cdots\to x_\ell$
of $D'$, define
\[
Q_P=\{x_3,x_6,x_9,\dots,x_{3\lfloor \ell/3\rfloor}\}.
\]
Let
\[
Q_0 = \left( \bigcup_{C \in \mathcal{C}} Q_C \right) \cup \left( \bigcup_{P \in \mathcal{P}} Q_P \right),
\]
where $\mathcal{C}$ and $\mathcal{P}$ denote the families of directed cycle and directed path components of $D'$.
According to the choice of $Q_C$ and $Q_P$, we have
\[
|Q_0|\le \frac{2n}{5}.
\]

Let $S$ be the set of starting vertices $s$ of directed path components of $D'$ such that
\[
N_D^-(s)\cap Q_0=\emptyset.
\]
Since the number of directed path components is $n-p_A-p_B$, we have
$|S|\le n-p_A-p_B$. The induced digraph $D[S]$ is bipartite. By Lemma \ref{bck}, there exist a kernel $K$ of $D[S]$.
Define $Q=Q_0\cup K$. Then
\[
|Q|\le |Q_0|+|K|\le \frac{7n}{5}-p_A-p_B.
\]

Now we prove that $Q$ is independent. For every $q\in Q_0$, there exists a unique
$p\in P_A\cup P_B$ such that $pq\in A(D')$. Then we have $q=\phi(p)$ and
$N_D^-(q)=\{p\}$; we denote $\phi^{-1}(q):=p$. By the construction of $Q_P$
and $Q_C$, we have $\phi^{-1}(q)\notin Q_0\cup S$ for all $q\in Q_0$. Since
$K\subseteq S$, it follows that $\phi^{-1}(q)\notin Q$ for all $q\in Q_0$.
Hence there is no arc from $Q$ to $Q_0$.

Recall  that $N_D^-(s)\cap Q_0=\emptyset$ for all $s\in S$. Since
$K\subseteq S$, there is no arc from $Q_0$ to $K$. Together with the fact that
$K$ is a kernel of $D[S]$, and hence independent, we conclude that $Q$ is
independent.

It remains to show that $\operatorname{dist}_D(Q,x)\le 2$ for all $x\in V(D)$.
All vertices in the directed cycle components of $D'$ can be reached within two steps from the corresponding sets $Q_C$.
In a directed path component
$
P:x_1\to x_2\to\cdots\to x_\ell,
$
all vertices $x_i$ with $i\ge 3$ are reached within two steps from $Q_P$.
If $x_1\notin S$, by the definition of $S$,
there is some $q\in Q_0$ satisfies $q\to x_1$, so $q$ reaches $x_1$ in one step and $x_2$ in two steps if $x_2$ exists.
If $x_1\in K$, then $x_1$ is reached in zero steps and $x_2$ in one step if it exists.
If $x_1\in S\setminus K$, then because $K$ is a kernel of $D[S]$, there is
some $k\in K$ such that $k\to x_1$.
Hence $k$ reaches $x_1$ in one step and $x_2$ in two
steps if it exists.  Thus $Q$ is a quasikernel of $D$.
\end{proof}

Now we are ready to present the proof of   Theorem \ref{th3}.
\begin{proof}[Proof of Theorem \ref{th3}]

If $p_A\le \alpha n$,
applying Lemma \ref{lemma9}, $D$ has a quasikernel
$Q_A\subseteq A$ with
\[
|Q|\le \frac{n+p_A}{3} \le \frac{n+\alpha n}{3}.
\]
The same argument applies if $p_B\le \alpha n$.

If $p_A> \alpha n$ and $p_B> \alpha n$,
applying Lemma \ref{claim3}, $D$ has a quasikernel $Q$ with
\[
|Q|\le\frac{7n}{5}-p_A-p_B\le \frac{7n}{5}-2\alpha n.
\]
Let $\alpha=16/35$.
Then in each case $D$ has a quasikernel of size at most $17n/35$.
\end{proof}
\section{Proof of Theorem \ref{th1}}

Let $D$ be a digraph. For a set $S\subseteq V(D)$, we write $\chi(S):=\{\chi(x):x\in S\}$ for the set
of colors appearing on $S$.

\begin{defi}
Let $r,L$ be positive integer with $L\ge r$.
An coloring $\chi: V(D) \to [r]$ of a digraph $D$ is called
\emph{$(r,L)$-locally colorful} on $D$ if for every $v\in V(D)$ and every
$0\le a\le r-1$,
\[
|\chi(B_D^-(v,L-a))|\ge r-a,
\]
where $|\chi(S)|$ denotes the number of distinct colors appearing in $S$.
A digraph $D$ is said to \emph{admit} an $(r,L)$-locally colorful coloring
if such a coloring exists.
\end{defi}

The following fact is an immediate consequence of the definition.

\begin{fact}\label{fact1}
Let $H$ be a spanning subgraph of a digraph $D$.
If $H$ admits an $(r,L)$-locally colorful coloring. Then the same vertex coloring $\chi$ is also $(r,L)$-locally colorful on $D$.
\end{fact}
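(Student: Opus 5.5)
The plan is to argue directly from the definitions: this is a monotonicity statement, and adding arcs can only shrink distances. I take the same coloring $\chi: V(D)\to[r]$ that is $(r,L)$-locally colorful on $H$. The target is that for every $v\in V(D)$ and every $0\le a\le r-1$ we have $|\chi(B_D^-(v,L-a))|\ge r-a$.

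The key step is the inclusion $B_H^-(v,t)\subseteq B_D^-(v,t)$ for every vertex $v$ and every integer $t\ge 0$. Since $H$ is a spanning subgraph of $D$, we have $V(H)=V(D)$ and $A(H)\subseteq A(D)$. Hence every directed path in $H$ is also a directed path in $D$, and so $\operatorname{dist}_D(u,v)\le \operatorname{dist}_H(u,v)$ for all $u,v$; this includes the case where the $H$-distance is $\infty$. So any $u$ with $\operatorname{dist}_H(u,v)\le t$ also satisfies $\operatorname{dist}_D(u,v)\le t$.

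Applying $\chi$ to this inclusion gives $\chi(B_H^-(v,L-a))\subseteq \chi(B_D^-(v,L-a))$. Therefore
\[
|\chi(B_D^-(v,L-a))|\ge |\chi(B_H^-(v,L-a))|\ge r-a,
\]
where the last inequality is exactly the hypothesis that $\chi$ is $(r,L)$-locally colorful on $H$. This holds for every $v$ and every $a$, so $\chi$ is $(r,L)$-locally colorful on $D$.

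There is no real obstacle. The only point that needs care is that "spanning" guarantees $H$ and $D$ have the same vertex set, so $\chi$ is defined on all of $V(D)$ and the condition is required at exactly the same vertices.
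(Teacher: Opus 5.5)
Your proposal is correct and is essentially the paper's argument: the paper gives no written proof and calls the fact an immediate consequence of the definition. Your observation that $A(H)\subseteq A(D)$ forces $B_H^-(v,t)\subseteq B_D^-(v,t)$, so that each color count can only grow, is exactly that immediate argument.
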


The following fact is a basic result in digraph theory and can be found in \cite{BJG}.

\begin{fact}\label{fact2}
For every digraph $D$, the condensation digraph $\operatorname{con}(D)$ satisfies the following:
\begin{enumerate}
\item[(i)] $\operatorname{con}(D)$ is acyclic;
\item[(ii)] $\operatorname{con}(D)$ has at least one source vertex;
\item[(iii)] $\operatorname{con}(D)$ has at least one sink vertex;
\item[(iv)] $\operatorname{con}(D)$ admits a topological ordering of its vertices.
\end{enumerate}
\end{fact}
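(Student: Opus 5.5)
This fact is standard; I would prove the four items in order, each from the previous one. Throughout, the vertices of $\operatorname{con}(D)$ are the strongly connected components of $D$, which partition $V(D)$. Since $D$ is finite, $\operatorname{con}(D)$ is a finite digraph, and by definition it has no loops.

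\emph{(i)} Suppose $\operatorname{con}(D)$ had a directed cycle $C_1C_2\cdots C_kC_1$ with $k\ge 2$ and distinct components. For each $i$ pick $u_i\in C_i$ and $v_{i+1}\in C_{i+1}$ (indices mod $k$) with $u_iv_{i+1}\in A(D)$. Inside each $C_i$ there is a directed path from $v_i$ to $u_i$, by strong connectivity. Concatenating these paths with the arcs $u_iv_{i+1}$ gives a closed directed walk through all the $C_i$. So every vertex of $C_1\cup\cdots\cup C_k$ reaches every other, and $C_1\cup\cdots\cup C_k$ lies in a single strongly connected component. This contradicts the maximality of the $C_i$ and the fact that they are distinct. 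The only delicate point in the whole argument is this maximality step.

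\emph{(ii) and (iii)} Start at any vertex of $\operatorname{con}(D)$ and repeatedly step to an in-neighbour. If this never stopped, finiteness would force a repeated vertex, giving a directed cycle and contradicting (i). So the process stops, and it stops at a vertex of in-degree $0$, which is a source. Walking forward along out-neighbours instead gives a sink by the same argument.

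\emph{(iv)} Induct on the number of vertices of $\operatorname{con}(D)$. By (ii), pick a source $C$ and place it first. Deleting a vertex from an acyclic digraph leaves an acyclic digraph, and the source argument in (ii) used only acyclicity and finiteness. So the remaining digraph has a topological ordering by induction. Since $C$ has no in-arcs, putting $C$ in front of that ordering respects every arc.
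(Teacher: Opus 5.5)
Your proof is correct and is the standard textbook argument: maximality of strong components gives (i), following in-arcs or out-arcs until stuck gives (ii) and (iii), and peeling off a source gives (iv). The paper does not prove this fact itself but defers to Bang-Jensen and Gutin, where essentially this argument appears; the only tacit assumption in yours is that $D$ has at least one vertex, which (ii) and (iii) need and which is the usual convention.
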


We need the following lemmas.

\begin{lem}\label{lemma1}
A digraph $D$ has no $(r-1)$-source set if and only if every source
strongly connected component of $D$ has at least $r$ vertices.
\end{lem}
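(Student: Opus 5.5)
We prove both directions using only the definition of an $s$-source set, $N^-(S)=\emptyset$ with $|S|\le s$, and the structure of $\operatorname{con}(D)$ from Fact~\ref{fact2}. The key observation is that a nonempty set $S$ with $N^-(S)=\emptyset$ is closed under taking in-neighbors, hence under backward reachability. Such an $S$ therefore contains, for each of its vertices $v$, the whole strongly connected component of $v$ together with every component that can reach it in $\operatorname{con}(D)$.

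\emph{($\Rightarrow$)} We argue by contrapositive. Suppose some source component $C$ has $|C|\le r-1$. Since $C$ has in-degree $0$ in $\operatorname{con}(D)$, no arc enters $C$ from $V(D)\setminus C$, so $N^-(C)=\emptyset$. Also $C\neq\emptyset$ and $|C|\le r-1$, so $C$ is an $(r-1)$-source set.

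\emph{($\Leftarrow$)} Again by contrapositive, let $S$ be an $(r-1)$-source set and pick $v\in S$. Let $C_v$ be its strong component. The set of components having a directed path in $\operatorname{con}(D)$ to $C_v$ is finite and contains a component $C$ of in-degree $0$. To see this, walk backwards in the acyclic digraph $\operatorname{con}(D)$ (Fact~\ref{fact2}(i)); the walk must terminate because there are finitely many components and no cycles. Thus $C$ is a source component, and every vertex of $C$ reaches $v$. By the closure property, every vertex $u$ reaching $v$ lies in $S$: induct on the length of a shortest $u$–$v$ path, noting that the predecessor of a vertex of $S$ along the path is either in $S$ or in $N^-(S)=\emptyset$. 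Hence $C\subseteq S$, so $|C|\le|S|\le r-1$, and $C$ is a source component with fewer than $r$ vertices.

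There is no real obstacle. The only point needing care is the backward-closure step, which is the short induction just described.
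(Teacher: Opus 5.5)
Your proof is correct and follows essentially the same route as the paper. The forward direction is identical, and for the converse both arguments rest on the fact that a set $S$ with $N^-(S)=\emptyset$ is closed under taking in-neighbors, which forces a source component inside $S$; the paper shows $S$ is a union of strong components and takes a source of $\operatorname{con}(D[S])$, while you walk backwards in $\operatorname{con}(D)$ from the component of some $v\in S$ and pull that source component into $S$ by backward closure, which spares you relating $\operatorname{con}(D[S])$ to $\operatorname{con}(D)$.
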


\begin{proof}
 Suppose there is a source strongly connected component $C$ with fewer than $r$ vertices. By definition, we have $N^-(C)=\emptyset$, which implies that $C$ is an $(r-1)$-source set.

Conversely, assume that $S$ is a $(r-1)$-source set, which leads to $|S|\le r-1$. We claim that $S$ is a union of strongly connected components. In fact, if a strongly connected component  meets both $S$ and $V(D)\setminus S$, then $N_D^-(S)\ne\emptyset$, a contradiction.
Since $\operatorname{con}(D[S])$ has a source vertex and $S$ is a source set,
$S$ must contain a strongly connected component $C$ that has in-degree $0$ in $\operatorname{con}(D)$ with $|C|\le |S|\le r-1$,
which implies that $D$ contains a source strongly connected component with less than $r$ vertices.
\end{proof}

We now recall the definition of the ear decomposition; see \cite{R,W}.

\begin{defi}\label{def6}
Let $H$ be a subgraph of a digraph $D$. An \emph{ear} of $H$ in $D$ is a directed path
\[
P: x_0 \to x_1 \to \cdots \to x_\ell
\]
such that
 \begin{itemize}
 \item[(i)] $x_0, x_\ell \in V(H)$;
  \item[(ii)] $x_1,x_2,\dots, x_{\ell-1} \in V(D) \setminus V(H)$;
  \item[(iii)] $\ell \ge 1$.
  \end{itemize}
The vertices $x_0$ and $x_\ell$ are called the \emph{end vertices} of the ear and the vertices $x_1,x_2,\dots, x_{\ell-1}$ are called the \emph{internal vertices} of the ear.
Note that the end vertices are allowed to coincide.
\end{defi}
The following lemma is straightforward.
\begin{lem}\label{lemma2}
Let $D$ be a strongly connected digraph and let $H$ be a non-empty strongly connected subgraph of $D$.
If $H\neq D$, then $D$ contains an ear $P: x\to z_1\to\cdots\to z_k\to y$ of $H$ with $x,y\in V(H)$  the end vertices (not necessarily distinct) and
$z_1,\dots,z_k\in V(D)\setminus V(H)$  the internal vertices.
Consequently, the digraph obtained from $H$ by adding the ear $P$ remains strongly connected.
\end{lem}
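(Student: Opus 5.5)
Since $H\ne D$ and $H$ is non-empty, I will first locate an arc leaving $V(H)$. Because $V(H)$ is a proper non-empty subset of $V(D)$ and $D$ is strongly connected, there are vertices $x\in V(H)$ and $w\in V(D)\setminus V(H)$, and a shortest directed path from $x$ to $w$ in $D$. This path must use an arc $uv$ with $u\in V(H)$ and $v\notin V(H)$. Relabel so that $x:=u$ and $z_1:=v$.

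There is one degenerate possibility: $V(H)=V(D)$ but $H$ misses an arc of $D$. In that case I will take any arc $xy\in A(D)\setminus A(H)$. This arc is an ear with $k=0$ and $\ell=1$, which Definition~\ref{def6} allows.

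In the main case, strong connectivity of $D$ gives a directed path from $z_1$ back into $V(H)$. I will take a shortest directed path $z_1\to\cdots\to y$ from $z_1$ to the set $V(H)$. By minimality, all vertices before its endpoint $y$ lie outside $V(H)$. Call them $z_1,\dots,z_k$; they are distinct because the path is a path. Prepending the arc $xz_1$ gives $P: x\to z_1\to\cdots\to z_k\to y$. This is an ear satisfying conditions (i)--(iii) of Definition~\ref{def6}, and $x$ and $y$ may coincide.

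For the consequence, let $H'=H\cup P$. I will show that every vertex of $H'$ reaches and is reached from $x$.
\begin{itemize}
\item For vertices of $H$, this holds by strong connectivity of $H$.
\item Each internal vertex $z_i$ is reached from $x$ along $P$.
\item Each $z_i$ reaches $y\in V(H)$ along $P$, and from $y$ it reaches $x$ inside $H$.
\end{itemize}
Hence $H'$ is strongly connected.

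The only delicate point is the bookkeeping above: treating the case where $H$ is spanning but not equal to $D$, and making sure the internal vertices are distinct and outside $H$. Both are handled by choosing shortest paths.
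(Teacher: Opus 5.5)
Your proof is correct. The paper gives no argument of its own (it just calls the lemma straightforward), and your construction is the standard one it leaves implicit: an arc leaving $V(H)$, then a shortest return path to $V(H)$, with a single-arc ear ($k=0$) covering the case $V(H)=V(D)$, $A(H)\subsetneq A(D)$. The only cosmetic fix is to make the case split before your first paragraph asserts that $V(H)$ is a proper subset of $V(D)$.
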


\begin{lem}\label{lemma3}
Let $H$ be a subgraph of $D$ that admits an $(r,L)$-locally colorful coloring $\chi$.
Suppose
\[
x\to z_1\to\cdots\to z_k
\]
is a directed path such that $x\in V(H)$ and $z_1,\dots,z_k\in V(D)\setminus V(H)$.
Let $H'$ be the digraph obtained by adding this path to $H$.
Then $\chi$ can be extended to $z_1,\dots,z_k$ so that it is still an $(r,L)$-locally colorful coloring of $H'$.
\end{lem}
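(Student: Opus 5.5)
The plan is to color $z_1,\dots,z_k$ greedily in this order, keeping the $(r,L)$-locally colorful condition true at every step. Throughout I write $z_0:=x$.

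\textbf{Step 1: old vertices are unaffected.} All new arcs of $H'$ are $x z_1$ and $z_{i-1}z_i$, so every new arc ends at a new vertex $z_i$. Hence no directed path in $H'$ ending at a vertex $v\in V(H)$ can use a new vertex. Therefore
\[
B^-_{H'}(v,t)=B^-_H(v,t)\quad\text{for all } v\in V(H),\ t\ge 1,
\]
and the condition at every old vertex is inherited from $H$.

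\textbf{Step 2: balls at new vertices.} In $H'$ the only in-neighbor of $z_i$ is $z_{i-1}$. So for every $t\ge 1$,
\[
B^-_{H'}(z_i,t)=\{z_i\}\cup B^-_{H'}(z_{i-1},t-1),
\]
where the ball of radius $0$ is $\{z_{i-1}\}$. The balls around $z_{i-1}$ only involve $V(H)\cup\{z_1,\dots,z_{i-1}\}$, which are already colored. By induction on $i$ (with Step 1 as the base case $i=1$), we may assume $z_{i-1}$ already satisfies the condition.

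\textbf{Step 3: choosing the color of $z_i$.} For $0\le a\le r-2$, set
\[
S_a:=\chi\bigl(B^-_{H'}(z_{i-1},L-1-a)\bigr).
\]
Applying the condition at $z_{i-1}$ with parameter $a+1\le r-1$ gives $|S_a|\ge r-1-a$. The radii $L-1-a$ decrease as $a$ increases, so the sets are nested: $S_0\supseteq S_1\supseteq\cdots\supseteq S_{r-2}$. For $a=r-1$ the requirement at $z_i$ is only $|\chi(B^-(z_i,L-r+1))|\ge 1$, which is trivial because $z_i$ itself lies in the ball. For $a\le r-2$ we need $|S_a\cup\{\chi(z_i)\}|\ge r-a$.

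If every $S_a$ already has at least $r-a$ colors, give $z_i$ any color. Otherwise let $a^*$ be the smallest index with $|S_{a^*}|=r-1-a^*$, and pick $\chi(z_i)\in[r]\setminus S_{a^*}$. Such a color exists since $|S_{a^*}|\le r-1$.

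Now check each $a\le r-2$. For $a<a^*$, minimality of $a^*$ gives $|S_a|\ge r-a$. For $a\ge a^*$, we have $S_a\subseteq S_{a^*}$, so $\chi(z_i)\notin S_a$ and hence $|S_a\cup\{\chi(z_i)\}|\ge r-a$. This completes the induction.

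The only delicate point is that a single color must fix all tight levels simultaneously. This is exactly what the nestedness of the balls guarantees, through the choice of the smallest tight index $a^*$.
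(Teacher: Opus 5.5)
Your proof is correct, but it is organized differently from the paper's.

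\textbf{The paper's route.} It does not induct along the path. It uses only the balls around $x$: it sets $M_s=[r]\setminus\chi(B_H^-(x,L-s))$ for $0\le s\le r-1$ and notes that $M_0\subseteq M_1\subseteq\cdots\subseteq M_{r-1}$ with $|M_s|\le s$. It then permutes the colors so that $M_s\subseteq\{1,\dots,s\}$, and colors the whole path at once periodically, $\chi(z_j)=1+((j-1)\bmod r)$. The verification splits into two cases:
\begin{itemize}
\item if $j\ge r-a$, the last $r-a$ vertices of the path already carry $r-a$ distinct colors;
\item if $j<r-a$, then $B_{H'}^-(z_j,L-a)=B_H^-(x,L-a-j)\cup\{z_1,\dots,z_j\}$ sees every color in $([r]\setminus[a+j])\cup[j]$.
\end{itemize}

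\textbf{Your route.} You prove the one-vertex step, which is essentially the case $k=1$ of the lemma, and iterate it. You work with the nested color sets $S_a$ at the immediate predecessor $z_{i-1}$. The key observation is that one color missing from the smallest tight level repairs all tight levels simultaneously.

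\textbf{Comparison.} Both arguments rest on the same nesting of backward balls. The paper's version gives an explicit closed-form coloring determined by the data at $x$ alone. Yours avoids the color relabeling and the case split on $j$, and makes it transparent why a single fresh color per step suffices.

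One formal point: your induction hypothesis should state that every vertex of $V(H)\cup\{z_1,\dots,z_{i-1}\}$ satisfies the condition in $H'$, not only $z_{i-1}$. This follows immediately from your Step 2, since later vertices never enter the backward balls of earlier ones.
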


\begin{proof}
For each $0\le s\le r-1$, let $M_s=[r]\setminus \chi(B_H^-(x,L-s))$.
Thus $M_s$ is the set of colors not visible in the closed backward $(L-s)$-ball of $x$.
Since $\chi$ is $(r,L)$-locally colorful, we have $|M_s|\le s$ for all $0\le s\le r-1$ and $M_0\subseteq M_1\subseteq\cdots\subseteq M_{r-1}$.
We may relabel the colors, so that $M_s\subseteq\{1,2,\dots,s\}$. We now extend the coloring $\chi$ by defining
\[
\chi(z_j)=1+\big((j-1)\bmod r\big).
\]

We claim that $\chi$ is an $(r,L)$-locally colorful coloring of $H'$.
Fix $0\le a\le r-1$. For every $u\in V(H)$, it is clear that $B_H^-(u,L-a)=B_{H'}^-(u,L-a)$, which implies $$|\chi(B_{H'}^-(u,L-a))|=|\chi(B_H^-(u,L-a))|\ge r-a.$$
Now we consider $|\chi(B_{H'}^-(z_j,L-a))|$. If $j\ge r-a$, then $$\{z_{j-(r-a)+1},z_{j-(r-a)+2},\dots,z_j\}\subseteq B_{H'}^-(z_j,L-a),$$
which implies $|\chi(B_{H'}^-(z_j,L-a))|\ge r-a$. If $j<r-a$, then $$B_{H'}^-(z_j,L-a)=B_H^-(x,L-a-j)\cup \{z_1,z_2,\dots,z_j\}.$$
By $M_{a+j}\subseteq [a+j]$ and $\chi(B_H^-(x,L-a-j))=[r]\setminus M_{a+j}$,
we have $$([r]\setminus [a+j])\cup [j]\subseteq \chi(B_H^-(x,L-a-j))\cup \chi(\{z_1,z_2,\dots,z_j\})=\chi(B_{H'}^-(z_j,L-a)),$$ which leads to $|\chi(B_{H'}^-(z_j,L-a))|\ge r-a$.
Therefore, $\chi$ is still $(r,L)$-locally colorful in $H'$.
\end{proof}

By Lemma \ref{lemma3} and Fact \ref{fact1}, the same conclusion holds when the path is closed into an ear by adding a final edge to a vertex of $H$. We record this as the following corollary.

\begin{corollary}\label{cor1}
Let $H$ be a subgraph of $D$ that admits an $(r,L)$-locally colorful coloring $\chi$. If an ear
\[
x\to z_1\to\cdots\to z_k\to y
\]
with $x,y\in V(H)$ and $z_1,\dots,z_k\in V(D)\setminus V(H)$ is added to $H$, then $\chi$ can be extended to $z_1,\dots,z_k$ so that the resulting digraph still admits an $(r,L)$-locally colorful coloring.
\end{corollary}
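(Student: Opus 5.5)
The plan is to apply Lemma \ref{lemma3} to the path part of the ear, and then argue that the final arc $z_k\to y$ does no harm. Write $P'$ for the path $x\to z_1\to\cdots\to z_k$, and let $H'$ be the digraph obtained from $H$ by adding $P'$. Lemma \ref{lemma3} extends $\chi$ to $z_1,\dots,z_k$ so that $\chi$ is $(r,L)$-locally colorful on $H'$. The digraph $H''$ obtained by adding the whole ear is $H'$ together with the single extra arc $z_ky$. (When $k=0$ the ear is just the arc $xy$, no new vertices are created, and the same reasoning applies directly to $H'=H$.)

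Next, observe that $H'$ is a spanning subgraph of $H''$: the two digraphs have the same vertex set $V(H)\cup\{z_1,\dots,z_k\}$, and $A(H')\subseteq A(H'')$. We can therefore apply Fact \ref{fact1} with $H'$ in the role of the spanning subgraph and $H''$ in the role of $D$. It shows that the extended coloring $\chi$ is also $(r,L)$-locally colorful on $H''$.

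To be safe, I would also check Fact \ref{fact1} directly, since it is the only real content of the corollary. Adding arcs can only shorten directed distances. Hence, for every vertex $v$ and every $0\le a\le r-1$,
\[
B_{H'}^-(v,L-a)\subseteq B_{H''}^-(v,L-a),
\]
so the number of colors appearing in the backward ball can only increase. The inequality $|\chi(B_{H''}^-(v,L-a))|\ge r-a$ then follows from the corresponding inequality in $H'$.

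I do not expect a real obstacle here. The only point needing care is that Fact \ref{fact1} is stated for a spanning subgraph. Adding the arc $z_ky$ creates no new vertices, so it is exactly the situation of a spanning supergraph, and the monotonicity of backward balls under adding arcs settles it.
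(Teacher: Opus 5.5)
Your proposal is correct and matches the paper's argument exactly: it applies Lemma \ref{lemma3} to the path $x\to z_1\to\cdots\to z_k$, then uses Fact \ref{fact1}, since adding the arc $z_ky$ gives a spanning supergraph. Your direct check of Fact \ref{fact1} via monotonicity of backward balls, and your treatment of the case $k=0$, are correct additions that the paper leaves implicit.
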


\begin{lem}\label{lemma4}
If $D$ be a strongly connected digraph with $|V(D)|\ge r\ge 2$.  Then $D$
has a spanning subgraph admitting an $(r,3r-4)$-locally colorful coloring.
\end{lem}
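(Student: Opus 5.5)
The plan is to build the required spanning subgraph in two stages. First, find a small strongly connected "seed" subgraph $H_0$ with at least $r$ vertices, together with an explicit $(r,3r-4)$-locally colorful coloring of it. Then grow $H_0$ to a spanning strongly connected subgraph by repeatedly attaching ears. By Lemma~\ref{lemma2}, as long as the current strongly connected subgraph $H\neq D$, there is an ear of $H$ in $D$, and adding it keeps the subgraph strongly connected. By Corollary~\ref{cor1}, the coloring extends to the internal vertices of the ear and stays $(r,3r-4)$-locally colorful. Since $D$ is finite, this terminates with a spanning strongly connected subgraph carrying the desired coloring. All the real work is therefore in the seed.

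For the seed I would again use ears, but stop early. Start with a directed cycle $C_0$ through some vertex; it exists by strong connectivity. Add ears one at a time, each by Lemma~\ref{lemma2}, until the vertex count first reaches at least $r$. Let $H_1$ be the strongly connected subgraph just before the last ear is added, so $|V(H_1)|\le r-1$, and let $P: x\to z_1\to\cdots\to z_k\to y$ be the last ear. Set $H_0=H_1\cup P$.

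If $H_1$ is already the whole of $D$ with $|V(D)|\ge r$, this case does not occur. The degenerate case where the very first cycle already has at least $r$ vertices is handled the same way, taking $H_1$ to be a single vertex $x$ and $P$ the rest of the cycle.

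The coloring of $H_0$ is as follows. Give the vertices of $H_1$ pairwise distinct colors from $[r]$; this is possible since $|V(H_1)|\le r-1$. Color the internal vertices $z_1,\dots,z_k$ periodically, as in Lemma~\ref{lemma3}, with the colors missing from $H_1$ placed first. The key distance facts are:
\begin{itemize}
\item $H_1$ is strongly connected on at most $r-1$ vertices, so any two of its vertices are at distance at most $r-2$.
\item Every $z_j$ is reached from $x$ along $P$.
\item Every vertex of $H_1$ is reached from $y$, hence from $z_k$, within $r-1$ steps.
\end{itemize}

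Then I would verify, for each $v$ and each $0\le a\le r-1$, that $B^-_{H_0}(v,3r-4-a)$ sees at least $r-a$ colors. There are three cases.
\begin{itemize}
\item For $z_j$ with $j\ge r-a$, the last $r-a$ path vertices already give distinct colors.
\item For $z_j$ with $j<r-a$, the ball contains $z_1,\dots,z_j$ and all of $H_1$ (radius $j+(r-2)\le 3r-4-a$ suffices), which together cover all $r$ colors.
\item For $v\in V(H_1)$, the ball contains all of $H_1$, giving $|V(H_1)|$ colors. It also contains a terminal segment $z_{k-m+1},\dots,z_k$ of the ear with $m$ about $(3r-4-a)-(r-2)-1=2r-3-a$, which by periodicity supplies the missing colors whenever $k$ is large enough. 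If $k$ is small, the ball contains the whole ear, hence all $r$ colors.
\end{itemize}

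I expect this last case to be the main obstacle. One has to arrange the periodic coloring of the ear, for example by choosing its phase relative to the end $z_k$ rather than the start $z_1$, so that both the initial segment near $x$ and the terminal segment near $y$ contribute the colors missing from $H_1$. The constant $3r-4$ is exactly what the budget $(r-2)+(r-1)+(r-1)$ allows: the diameter of $H_1$, plus the hop back into $H_1$, plus a run of $r-1$ ear vertices. I would try first to put the colors absent from $H_1$ in a window of length $r-|V(H_1)|$ at both ends of the ear, checking the index arithmetic modulo $r$. If that fails, I would instead truncate the last ear: add only a prefix of it via Lemma~\ref{lemma3} so that $H_0$ has exactly $r$ vertices and all colors are distinct, then reconnect later with Corollary~\ref{cor1}.
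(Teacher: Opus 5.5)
\textbf{Verdict: the route differs from the paper's and can be completed, but as written the verification of the seed $H_0=H_1\cup P$ has a gap, and the fallback does not work.}

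\textbf{What is wrong.} Your second bullet is false. With the colors missing from $H_1$ placed first on $z_1,z_2,\dots$, the set $V(H_1)\cup\{z_1,\dots,z_j\}$ carries only $|V(H_1)|+j$ colors when $j<r-|V(H_1)|$. This can be less than $r-a$, e.g.\ a long first cycle (so $|V(H_1)|=1$) with $j=1$, $a=0$, $r\ge 3$. In that case you must also use the terminal segment of the ear, which reaches $z_j$ through $z_k\to y\to\cdots\to x\to z_1\to\cdots\to z_j$.

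Your third bullet is left open, but the obstacle you anticipate there is not real. The ball needs only $r-a$ distinct colors, not specifically the colors absent from $H_1$. A long enough terminal run of periodically colored ear vertices supplies them without any phase adjustment.

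The fallback does fail. $H_1$ together with a pendant prefix $x\to z_1\to\cdots\to z_{r-|V(H_1)|}$ has no arc back into $H_1$. So for $a=0$ every ball centered in $H_1$ sees only $|V(H_1)|<r$ colors. That graph is not $(r,3r-4)$-locally colorful, and Corollary~\ref{cor1} cannot be applied to it.

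\textbf{How to close the gap.} Let $h=|V(H_1)|$. Color $H_1$ bijectively with $[r]\setminus[r-h]$ and $z_i$ with $1+((i-1)\bmod r)$, and note $k\ge r-h$. Since $H_1$ has diameter at most $h-1$, we have $\operatorname{dist}_{H_0}(z_{k-i},w)\le h+i$ for $w\in V(H_1)$, and $\operatorname{dist}_{H_0}(z_{k-i},z_j)\le h+j+i$.

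For $v\in V(H_1)$: either $h\ge r-a$ and we are done, or $h\le r-a-1$. In the latter case $h+(r-1)\le 2r-2-a\le 3r-4-a$, so the ball contains $z_{k-i}$ for every $0\le i\le \min(k,r)-1$.

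For $v=z_j$ with $j<r-a$, there are three possibilities:
\begin{itemize}
\item $j\ge r-h$: then $H_1$ and $z_1,\dots,z_j$ already show all $r$ colors.
\item $h+j\ge r-a$: done.
\item $h+j\le r-a-1$: then $h+j+(r-1)\le 2r-2-a$, and again the ball contains $z_{k-i}$ for every $0\le i\le\min(k,r)-1$.
\end{itemize}
In the cases that reach the terminal segment, those vertices are either $r$ consecutive ear vertices, hence all colors, or (if $k<r$) the whole ear. In the second case the first $r-h$ ear vertices complete the palette of $H_1$.

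\textbf{Comparison with the paper.} The paper avoids all of this with a case split on whether $D$ has a directed cycle of length at least $r$.
\begin{itemize}
\item If it does, that cycle colored periodically is the seed.
\item If not, every ear added to a strongly connected subgraph has at most $r-2$ internal vertices, since otherwise it closes a cycle of length at least $r$. So the seed has at most $2r-3$ vertices and diameter at most $2r-4\le 3r-4-a$. Every ball is then the whole seed, and no color counting is needed.
\end{itemize}
Your unified seed covers both situations at once (a long first cycle is your case $|V(H_1)|=1$), at the price of the bookkeeping above. That bookkeeping only ever uses radius $2r-2-a$, so your seed is in fact $(r,2r-2)$-locally colorful. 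The proofs of Lemma~\ref{lemma3}, Corollary~\ref{cor1}, Lemma~\ref{lemma5} and Theorem~\ref{th1} go through verbatim for any $L\ge r$. Completed, your route would therefore give $r$ pairwise disjoint $2r$-kernels rather than $(3r-2)$-kernels.
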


\begin{proof}
We first construct a subgraph $H_0\subseteq D$ which admits an $(r,3r-4)$-locally colorful coloring.
If $D$ contains a directed cycle of length at least $r$, say $v_1\to v_2\to\cdots\to v_m\to v_1$, then we color the cycle by
$\chi(v_i)=1+((i-1)\bmod r)$. Let $H_0$ be the cycle. Then $\chi$ is an $(r,3r-4)$-locally colorful coloring on $H_0$.

 Now suppose every directed cycle in $D$ has length less than $r$. Since $D$ is strongly connected, it contains a directed cycle $C$. Starting from $C$ and repeatedly applying Lemma \ref{lemma2}, we can add ears successively to obtain a larger strongly connected subgraph. The process stops once the number of vertices increases to at least $r$. At each step, the current subgraph is strongly connected, so each added ear has at most $r-2$ internal vertices; otherwise, the resulting subgraph would contain a directed cycle of length at least $r$, contradicting the assumption. Consequently, we obtain a strongly connected subgraph $H_0$ satisfying $$r\le |V(H_0)|\le 2r-3.$$
Color $H_0$ with all $r$ colors appearing at least once.  Since $H_0$ is strongly connected, we have $B_{H_0}^-(x,2r-4)=H_0$ and $|\chi(B_{H_0}^-(x,2r-4))|=|\chi(H_0)|=r$.
Clearly, for any $0\le a\le r-1$ we have
\[
|\chi(B_{H_0}^-(x,3r-4-a))|=|\chi(H_0)|=r\ge r-a.
\]

In either case, we obtain an $(r, 3r-4)$-locally colorful colored strongly connected subgraph  $H_0$, with whose coloring denoted by $\chi$.
Thus, by Lemma \ref{lemma2}, starting from the strongly connected single-vertex subgraph
$H_0$, we may add ears successively to obtain a strongly connected
spanning subgraph of $D$. By Corollary \ref{cor1}, the coloring $\chi$ can be extended
to all vertices of $D$ while preserving the $(r,3r-4)$-locally colorful property. It follows from Fact 2.2 that $\chi$ is also an $(r,3r-4)$-locally colorful coloring on $D$, since $H_1 \subseteq D$.
\end{proof}

\begin{lem}\label{lemma5}
If $D$ has no $(r-1)$-source set, then $D$ has a spanning subgraph
admitting an $(r,3r-4)$-locally colorful coloring.
\end{lem}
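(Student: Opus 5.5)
The plan is to reduce the statement to Lemma~\ref{lemma4} through the condensation digraph. By Lemma~\ref{lemma1}, every source strongly connected component of $D$ has at least $r$ vertices. So Lemma~\ref{lemma4} applies to each source component $C$ and gives a spanning subgraph $H_C$ of $D[C]$ with an $(r,3r-4)$-locally colorful coloring $\chi_C$. Backward balls inside distinct components never interact within $\bigcup_C H_C$. Hence the disjoint union $H_0=\bigcup_{C \text{ source}} H_C$, colored by the union of the $\chi_C$, is $(r,3r-4)$-locally colorful.

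Next I will absorb the remaining vertices one at a time, following a topological ordering of $\operatorname{con}(D)$ given by Fact~\ref{fact2}(iv). Let $H$ be the current subgraph; it spans the union of all components processed so far, and the next component $C'$ is not a source. Then $C'$ has an in-neighbor in some earlier component, i.e.\ an arc $x z_1$ with $x\in V(H)$ and $z_1\in C'$. Since $D[C']$ is strongly connected, $z_1$ reaches every vertex of $C'$. I will grow an out-branching (arborescence) of $D[C']$ rooted at $z_1$. Its vertices can be listed so that each new vertex $w$ is joined by a directed path starting either at $x$ (through $z_1$) or at an already added vertex of $C'$. Each such path has the form $y\to w_1\to\cdots\to w_k$, with $y$ already in the current subgraph and $w_1,\dots,w_k$ new. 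This is exactly the setting of Lemma~\ref{lemma3}, so the coloring extends and stays $(r,3r-4)$-locally colorful.

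Concretely, I will decompose the arborescence into root-to-leaf paths: first the path $x\to z_1\to\cdots$ down to a leaf, then each further branch hanging from an earlier vertex. Each branch is a path from a vertex of $H$ to new vertices, so Lemma~\ref{lemma3} applies repeatedly. After all components are processed, the resulting subgraph is spanning, and it is a subgraph of $D$ carrying an $(r,3r-4)$-locally colorful coloring, as required. (Fact~\ref{fact1} then transfers the coloring to $D$ itself if needed.)

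The only delicate point is that Lemma~\ref{lemma3} requires the added path to start at a vertex of the current subgraph and to consist otherwise of new vertices. The arborescence decomposition guarantees this. I must also check that adding these vertices never shrinks an existing backward ball, since adding vertices and arcs can only enlarge backward balls; this is already built into Lemma~\ref{lemma3}. I do not expect any real obstacle. The main care is bookkeeping the order so that every non-source component receives its entering arc from an already processed component, which is exactly what the topological ordering provides.
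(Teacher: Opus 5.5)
Your proof is correct and follows the paper's outline: Lemma~\ref{lemma1} and Lemma~\ref{lemma4} handle the source components, then a topological sweep of $\operatorname{con}(D)$ uses Lemma~\ref{lemma3} to enter each non-source component. The difference is how you fill in a non-source component $C'$ once its entry vertex is colored. The paper starts from the single vertex $y$ and builds a spanning strongly connected subgraph of $D[C']$ by ears. For this it needs Lemma~\ref{lemma2} and Corollary~\ref{cor1}, that is, Lemma~\ref{lemma3} followed by closing each ear with an extra arc, which is justified by Fact~\ref{fact1}. You instead take an out-branching of $D[C']$ rooted at $z_1$ and apply Lemma~\ref{lemma3} only to open paths.

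Your route is leaner. It needs neither ears nor the preservation of strong connectivity. It also shows that the component-by-component bookkeeping is unnecessary once the source components are colored. Every vertex of $D$ is reachable from some source component, so a single out-forest grown from $H_0$ already gives the required spanning subgraph. You can even add one vertex at a time via its parent arc, which is the case $k=1$ of Lemma~\ref{lemma3}.

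The paper's ear-based version reuses the extension step from its proof of Lemma~\ref{lemma4}. It produces a spanning subgraph that is strongly connected on each component, but Theorem~\ref{th1} never uses that extra structure.
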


\begin{proof}
Consider the condensation digraph $\operatorname{con}(D)$ of $D$. By Fact \ref{fact2}, the vertices of $\operatorname{con}(D)$ admit a topological ordering. We color the strongly connected components of $D$ according to this topological order.

Since $D$ has no $(r-1)$-source set, by Lemma \ref{lemma1}, every source strongly connected component contains at least $r$ vertices. Combining this with Lemma \ref{lemma4}, every source strongly connected component admits an $(r,3r-4)$-locally colorful coloring. We apply such a coloring to each source strongly connected component, and denote the resulting coloring by $\chi$.

We now color the non-source strongly connected components following the same topological ordering. Let $C$ be such a component at the time it is processed. By the definition of the topological ordering, there exists an arc $xy \in A(D)$ such that $x$ has already been colored and $y \in V(C)$. By Lemma \ref{lemma3}, the coloring $\chi$ can be extended to $y$ so that it remains $(r,3r-4)$-locally colorful. Applying Lemma \ref{lemma2} and Corollary \ref{cor1}, there is a spanning graph of $D[C]$ obtained from a strongly connected single vertex graph $\{y\}$ by adding ears. So the coloring $\chi$ can be extended to all vertices of $C$ while preserving the $(r,3r-4)$-locally colorful property.

After all components have been processed, we obtain a spanning subgraph $H$ of $D$ such that $\chi$ is an $(r,3r-4)$-locally colorful coloring of $H$. By Fact \ref{fact1}, $\chi$ is also an $(r,3r-4)$-locally colorful coloring of $D$.
\end{proof}

Now we are ready to present the proof of Theorem \ref{th1}.

\begin{proof}[Proof of Theorem \ref{th1}]
By Lemma \ref{lemma5}, $D$ admits an $(r,3r-4)$-locally colorful coloring  $\chi$. Let
\[
\chi: V(D) \to [r] \quad\text{ and } \quad A_i = \chi^{-1}(i) \text{~~for~~}1 \le i \le r.
\]
Taking $a=0$ in the definition of locally colorful, every vertex of $D$ is reachable within distance $3r-4$ from some vertex of each color class $A_i$.

By Theorem \ref{qk}, each digraph $D[A_i]$ contains a quasikernel, say $Q_i$. Then, for any $v \in V(D)$, there exist $a_i \in A_i$ and $q_i \in Q_i$ such that $\operatorname{dist}_D(a_i, v) \le 3r-4$ and $\operatorname{dist}_{D[A_i]}(q_i, a_i) \le 2$. Therefore,
\[
\operatorname{dist}_D(q_i, v) \le 3r-2.
\]
Thus each $Q_i$ is a $(3r-2)$-kernel of $D$, and the $Q_i$ are pairwise disjoint.
\end{proof}

\section{Proof of Theorem \ref{th2}}

We first recall the following structural lemma of Spiro \cite{S}, which describes the elementary structure of source-free unicyclic digraphs.

\begin{lem}\label{lemma6}\cite{S}
If $D$ is unicyclic, then
\begin{itemize}
\item[(i)] $D$ contains exactly one directed cycle $C$;
\item[(ii)] for every vertex $w \in V(D)$, there exists a unique directed path $P_w$ whose first vertex is in $C$, last vertex is $w$, and which contains no other vertices of $C$.
\end{itemize}
\end{lem}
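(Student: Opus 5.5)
Since every vertex of a unicyclic digraph has in-degree exactly $1$, the in-neighbour map $w\mapsto$ (unique in-neighbour of $w$) is well defined. I will prove (i) and (ii) together by following this map backwards.

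For (i), existence of a cycle comes first. Start at any vertex $w_0$ and define $w_{i+1}$ as the unique in-neighbour of $w_i$. Since $V(D)$ is finite, some vertex repeats. The segment between two occurrences of a repeated vertex, read forwards, is a directed cycle.

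For uniqueness, I use a counting argument. A digraph in which every vertex has in-degree $1$ has exactly $|V(D)|$ arcs. Its underlying graph is connected with $n$ vertices and $n$ edges, so it contains exactly one cycle as an undirected graph. A directed cycle of length at least $3$ is a cycle of the underlying graph, so there is at most one of these. A directed $2$-cycle would give a double edge in the underlying multigraph, and this double edge is itself the unique cycle of that multigraph. In either case there is at most one directed cycle, which proves (i). Call it $C$.

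For (ii), I apply the backward walk to a given $w$. The sequence $w=w_0,w_1,w_2,\dots$ is forced, because each vertex has a unique in-neighbour. It eventually enters a directed cycle, and by (i) that cycle is $C$. Let $m$ be the least index with $w_m\in V(C)$. The vertices $w_0,\dots,w_m$ are distinct: a repetition before index $m$ would create a directed cycle avoiding $C$, contradicting (i). Reversing this sequence gives a directed path $P_w: w_m\to\cdots\to w_0=w$ whose first vertex lies in $C$ and which meets $C$ only there. If $w\in C$, the path is the trivial one with $m=0$.

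Uniqueness of $P_w$ comes from the same forced structure. Let $P$ be any such path ending at $w$. Reading $P$ backwards from $w$, each step must use the unique in-neighbour, so $P$ is an initial segment of $w_0,w_1,\dots$. The first vertex of $P$ lies on $C$, and no earlier vertex of $P$ (read backwards) does, so that first vertex must be $w_m$. Hence $P=P_w$.

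The only delicate point is uniqueness of the cycle in (i). In particular, the edge count must be handled correctly when a directed $2$-cycle is present, and the convention that $D$ has no multiple arcs ensures the counting argument above applies.
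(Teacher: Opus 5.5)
Your proof is correct. The paper gives no proof of this lemma: it is quoted from Spiro~\cite{S} and used only to make the coordinates $\rho_U,\rho_V$ in the proof of Theorem~\ref{th2} well defined. So there is no internal argument to compare with, and your write-up supplies a self-contained one.

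Two small points of precision.

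\begin{itemize}
\item In the existence step, take the \emph{first} repetition $w_i=w_j$ (least $j$). Then $w_i,\dots,w_{j-1}$ are distinct, and $w_i\to w_{j-1}\to\cdots\to w_{i+1}\to w_i$ is a genuine cycle rather than a closed walk.
\item In the counting step, the no-multiple-arcs convention is not what matters, since in-degree $1$ already excludes parallel arcs. What the argument needs is that edges are counted in the underlying \emph{multigraph}, so that a directed $2$-cycle contributes two parallel edges. In the underlying simple graph it collapses to one edge and the count of $n$ edges fails. You do handle this correctly.
\end{itemize}

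If you prefer to avoid that bookkeeping, uniqueness also follows from the backward walk you already use in (ii). The backward walk from any vertex eventually runs around a single directed cycle, and a vertex on a directed cycle $Z$ runs around $Z$ itself. For an arc $x\to y$, the backward walk from $y$ is $y,x,\dots$, so $x$ and $y$ end on the same cycle. Hence there are no arcs between the classes ``vertices ending on a given cycle'', and weak connectivity leaves exactly one class, i.e.\ exactly one directed cycle.
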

Now we present the proof of Theorem \ref{th2}.

\begin{proof}[ Proof of Theorem \ref{th2}.]
Applying the structural description in Lemma \ref{lemma6}  to the digraph  $D$ in  Theorem \ref{th2}, the unique directed cycle of $D$ can be written as
\[
u_0 \to v_0 \to u_1 \to v_1 \to \cdots \to u_{\ell-1} \to v_{\ell-1} \to u_0,
\]
with \(u_i \in U\), \(v_i \in V\) for $i=0,1,\ldots,\ell-1$.

We now define two coordinates $\rho_U, \rho_V: V(D) \to \mathbb{Z}_\ell$. On the directed cycle, we set
\[
\left\{
\begin{aligned}
\rho_U(u_i) &= \rho_U(v_i) = i, \\
\rho_V(v_i) &= i,\quad \rho_V(u_i) = i-1
\end{aligned}
\right.
.
\]
for each $ i \in \mathbb{Z}_\ell$.
Off the cycle, we extend $\rho_U$ and $\rho_V$ along the unique directed paths from the cycle by
\[
\left\{
\begin{aligned}
\rho_U(y) &= \rho_U(x) \ \text{if } x \to y,\ x \in U,\ y \in V, \quad
\rho_U(y) = \rho_U(x) + 1 \ \text{if } x \to y,\ x \in V,\ y \in U; \\[0.5em]
\rho_V(y) &= \rho_V(x) \ \text{if } x \to y,\ x \in V,\ y \in U, \quad
\rho_V(y) = \rho_V(x) + 1 \ \text{if } x \to y,\ x \in U,\ y \in V.
\end{aligned}
\right.
\]
These extensions are well-defined by Lemma \ref{lemma6}, which guarantees that every vertex off the cycle lies on a unique directed path from the cycle.
Let $h=(q+1)/2$, $k=\left\lceil\ell/h\right\rceil$, and $$S(a)=\{a,a+h,a+2h,\dots,a+(k-1)h\}\subseteq \mathbb{Z}_\ell.$$ Define
\[
Q_U(a)=\{x\in U:\rho_U(x)\in S(a)\},\quad Q_V(a)=\{x\in V:\rho_V(x)\in S(a)\}.
\]

Now we prove that $Q_U(a)$ and $Q_V(a)$ are $q$-kernels of $D$.
By symmetry, it suffices to prove that $Q_U(a)$ is a $q$-kernel of $D$.
Since $Q_U(a) \subseteq U$ and $D$ has bipartition $U \cup V$, the set $Q_U(a)$ is independent.

Every vertex of $Q_U(a)$ is reached from $Q_U(a)$ in distance $0$. Let $x\in V(D)\setminus Q_U(a)$. Since $(k-1)h<\ell\le kh$, the set $S(a)$ has size $k$, and its gaps around the cyclic order of $\mathbb Z_\ell$ are all at most $h$. Hence one of the residues $\rho_U(x),\rho_U(x)-1,\ldots,\rho_U(x)-(h-1)$ belongs to $S_a$, say $\rho_U(x)-d$, where $d\in \{0,1,2,\dots,h-1\}$.

By the definition, $\rho_U(x),\rho_U(x)-1,\ldots,\rho_U(x)-(h-1)$ are exactly the $\rho_U$-coordinates of the first $h$ vertices of $U$ on the unique backward in-neighbor chain from $x$  counted along
the chain (modulo $\ell$). So we can select a vertex $y$ in this backward in-neighbor chain with $y\in U$ and $\rho_U(y)=\rho_U(x)-d$, which leads to $y\in Q_U(a)$.

The backward in-neighbor chain from $x$ to $y$ is a directed walk from $y$ to $x$. Its length is at most $2d$ if $x\in U$, and at most $2d+1$ if $x\in V$. Since $q=2h-1$, we have $\operatorname{dist}_D(Q_U(a),x)\le q$. Therefore $Q_U(a)$ is a $q$-kernel of $D$.

As $a$ ranges over $\mathbb{Z}_\ell$, each residue of $\mathbb{Z}_\ell$ belongs to exactly $k$ of the sets $S(a)$.
\[
\sum_{a\in \mathbb{Z}_\ell}|Q_U(a)|=k|U|,\qquad \sum_{a\in \mathbb{Z}_\ell}|Q_V(a)|=k|V|.
\]
Therefore there exist \(a,b\in \mathbb{Z}_\ell\) such that
\[
|Q_U(a)|\le \frac{k}{\ell}|U|,\qquad |Q_V(b)|\le \frac{k}{\ell}|V|,
\]
which leads to
\[|Q_U(a)|+|Q_V(b)|\le\frac{k}{\ell}(|U|+|V|)=\frac{\lceil \ell/h\rceil}{\ell}|V(D)|\le  2\cdot\frac{\lceil \ell/(q+1)\rceil}{\ell}|V(D)|.
\]
\end{proof}
\section{Declaration on the Use of AI}
At an early stage of this work, AI-assisted tools were used as a brainstorming aid.   All results and proofs were written by the authors, who take full responsibility for the content of this work.

\end{document}